\documentclass[11pt]{article}
\usepackage{graphics}
\usepackage{graphicx}
\usepackage{epsfig}
\usepackage{amsmath}
\DeclareMathOperator{\sign}{sign}
\usepackage{amsfonts}
\usepackage{amssymb}
\usepackage{xcolor}
\usepackage{enumerate}
\usepackage{subfigure}
\usepackage[export]{adjustbox}
\makeatletter
\@addtoreset{equation}{section}
\makeatother
\usepackage{pst-plot}
\psset{xunit=15mm}

\numberwithin{equation}{section}
\usepackage{tikz}
\makeatletter
\@namedef{subjclassname@2020}{\textup{2020} Mathematics Subject Classification}
\makeatother

\usepackage{a4wide,amsmath,amssymb,latexsym,amsthm}

\usepackage{enumitem}

\usepackage[colorlinks=true,urlcolor=blue,
citecolor=red,linkcolor=blue,linktocpage,pdfpagelabels,
bookmarksnumbered,bookmarksopen]{hyperref}
\usepackage[english]{babel}

\usepackage{setspace}

\newcommand{\1}{\mathbbm{1}}

\usepackage{authblk}    

\numberwithin{equation}{section}
\usepackage{mathrsfs,mathtools,epic,bm}
\usepackage{hyperref}
\hypersetup{colorlinks=true,linkcolor=blue,filecolor=mangeta,urlcolor= cyan}

\newtheorem{theorem}{Theorem}[section]
\newtheorem{proposition}[theorem]{Proposition}
\newtheorem{lemma}[theorem]{Lemma}
\newtheorem{corollary}[theorem]{Corollary}
\theoremstyle{definition}

\newtheorem{remark}[theorem]{Remark}

\def \dis {\displaystyle}

\def \R {\mathbb{R}}

\def \H {\mathcal{H}}

\def \A {\mathcal{A}}
\def \F {\mathbf{F}}
\def \D {\mathbf{D}}

\def \L {\mathcal{L}}
\def \BUC{\mathrm{BUC}}
\def \V {\mathcal{V}}
\def \U {\mathcal{U}}

\numberwithin{equation}{section}

\makeatletter
\renewcommand{\@seccntformat}[1]{\csname the#1\endcsname.\quad}
\makeatother

\def \dis {\displaystyle}
\title{Complete characterization of the sign of the wave speed in the symmetric Lotka-Volterra system under strong competition}

\author[]{Cyrille Kenne\thanks{Corresponding author}}

\affil[]{\footnotesize Department of Mathematics, University of British Columbia, Vancouver, BC V6T 1Z2, Canada}
\date{\today}
	
\begin{document}
\maketitle
\footnotetext{Email address: \href{mailto:kenne@math.ubc.ca}{kenne@math.ubc.ca}}

\begin{abstract}
This paper provides a complete characterization of the sign of the propagation speed in the symmetric two-species Lotka-Volterra competition-diffusion model under strong competition. The system admits a unique bistable travelling front and the sign of its speed determines which of the two species invades the other.
We prove that for every strong competition intensity greater than $1$ and every diffusion ratio $d\neq1$, the travelling front propagates so as to expand the territory occupied by the faster-diffusing species. The front has a zero speed exactly when $d=1$.  We also establish the smooth dependence of the wave speed and the travelling front on the model parameters.
The main step in the sign characterization is to prove that a monotone standing front cannot exist when the diffusion rates are different. Combined with continuity of the wave speed with respect to the parameters, the species-exchange symmetry of the system, and an explicit travelling front at a particular parameter value, we obtain the sign of the propagation speed throughout the entire parameter region. This establishes the ``Unity is not strength'' theorem, which was previously known only in restricted parameter regimes. 
\end{abstract}

\textbf Mathematics Subject Classification. {35C07, 35K55,	35K57, 92D25, 92D40 }\par
\noindent
{\textbf {Key-words}}~:~  Lotka-Volterra; travelling wave; competition-diffusion model.

\section{Introduction}
We study the symmetric competition-diffusion system 
\begin{equation}\label{model}
\left\{ 
\begin{array}{llllll}
	u_t&=&u_{xx}+u(1-u-kv) &\text{in}&(0,\infty)\times \R, \\
	v_t&=&dv_{xx}+v(1-v-ku)& \text{in}& (0,\infty)\times \R.
\end{array} 
\right.
\end{equation}
 in the strong competition regime $k>1$. The two species have the same growth, the same carrying-capacity and the same competition parameters. They differ only through the relative diffusion coefficient $d>0$.  These assumptions imply that the reaction term does not provide an intrinsic advantage to either species, allowing us to focus on the effects of the dispersal. Under this strong competition, the system \eqref{model} has two locally stable exclusion equilibria  $(1,0)$ and $(0,1)$. 
 The system \eqref{model} as well as its generalizations have been extensively studied in the literature. We refer for instance to \cite{carrere2018, gardner1982, girardin2019, guo2013, iannelli2015, kan1995} and the references therein.  
 
 The effect of movement on competitive success depends strongly on the spatial environment. In heterogeneous habitats and under blind competition (corresponding to $k=1$ here), Dockery et al. showed in \cite{dockery1998} that the slower diffuser excludes the faster one. This slow-dispersal advantage is usually interpreted as resulting from the greater retention of individuals in favourable regions. Girardin \cite{girardin2019} named this conclusion the ``Unity is strength'' principle. This means that one population dominates if and only if compared to the other, its individuals remain ``united''. 
 
 In homogeneous environments with strong competition, numerical and partial analytical results have consistently pointed in the opposite direction. There, the more rapidly diffusing population advances through a segregated contact zone \cite{alzahrani2010, alzahrani2012, girardin2019, girardin2018, ninomiya1995}. This means that it is better to randomly explore the hostile territory than to ``stay united'' even at the cost of a high mortality risk  for some individuals \cite{girardin2019}. 
 
 To understand this invasion process from a mathematical point of view, one can study solutions describing the propagation of the interface separating the two competing populations. In homogeneous environments, these propagation phenomena are naturally modelled by travelling waves (see e.g.,  \cite{volpert1994} for more details), namely solutions moving with constant speed on the infinite real line while maintaining their shape. The existence and uniqueness of travelling waves for partial differential equations has been extensively studied in the past (see e.g., \cite{gardner1982, tang1980,volpert1994}).  As pointed out in \cite{girardin2019}, the existence of a travelling wave provides only part of the answer to the ecological invasion problem. Indeed, once existence and uniqueness (up to translation) are established, the key quantity becomes the wave speed. Its sign determines which dispersal strategy successfully invades the other. However, determining this sign is generally much more delicate (see e.g., \cite{wang2022}). Consequently, only partial results are currently known.
 
 A monotone travelling front solution to \eqref{model}  with speed $c$ is a solution of the form,
 \begin{equation*}
 	(u,v)(t,x)=(U,V)(\xi), \qquad \xi=x-ct,
 \end{equation*}
such that
 \begin{equation}\label{eq1}
 	\begin{array}{llllll}
 	U''+cU'+U(1-U-kV)&=&0,\\
 	dV''+cV'+V(1-V-kU)&=&0, 
 	\end{array} 
 \end{equation}
 with 
 \begin{equation}\label{eq2}
 	(U,V)(-\infty)=(1,0), \qquad (U,V)(+\infty)=(0,1) 
 \end{equation}
 and 
  \begin{equation}\label{eq2a}
 U'<0, \qquad  V'>0 \; \text{ in } \R.
 \end{equation}
 Monotone here, means that \eqref{eq2a} is satisfied.  In our setting, $c<0$ means that $v$ chases $u$. Biologically, a negative speed indicates that $v$ dominates $u$ and $u$ becomes locally extinct, for initial conditions close to the travelling wave in a suitable function space (see e.g., \cite{gardner1982}). We note that this is a local invasion notion and does not determine the outcome for arbitrary initial data. When $d>1$, the statement ``Unity is not strength" holds true if and only if $c:=c_{k,d}<0$. A more precise question is: 
 
 \textit{If $k>1$ and $d>0$, is the bistable wave speed always of the sign corresponding to invasion by the faster diffuser? }
 
 The aim of this paper is to determine the sign of $c_{k,d}$. This global sign characterization has not yet been proved in general.  The main result of this paper is stated as follows.

 \begin{theorem}\label{maintheo}
 For every $k>1$ and $d>0$, 
 \begin{equation}\label{eq3}
\sign c_{k,d}=\sign(1-d).
 \end{equation}
 Equivalently,
 \begin{equation}\label{eq4}
 	c_{k,d}\left\{ 
 	\begin{array}{llllll}
 	>0, &  0<d<1,\\
 	=0, &  d=1,\\
 	<0, & d>1.
 	\end{array} 
 	\right.
 \end{equation}
 In particular, $c_{k,d}=0$ if and only if $d=1$.
 \end{theorem}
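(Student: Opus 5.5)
We follow the strategy announced in the abstract. We combine four ingredients: existence, uniqueness and continuity of $c_{k,d}$ in $(k,d)$; a species-exchange symmetry; an explicit front at one parameter value; and a proof that no monotone standing front exists when $d\neq 1$.

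\textbf{Step 1 (standing case and symmetry).} For $d=1$, set $(\tilde U,\tilde V)(\xi)=(V,U)(-\xi)$. This pair solves \eqref{eq1} with $c$ replaced by $-c$ and satisfies the same limits \eqref{eq2} and monotonicity \eqref{eq2a}. Uniqueness of the speed then gives $c_{k,1}=-c_{k,1}$, hence $c_{k,1}=0$.

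For general $d$, we exchange the species and rescale space by $x\mapsto x/\sqrt d$. The $v$-equation then has unit diffusion and the $u$-equation has diffusion $1/d$. The same reflection argument yields
\begin{equation*}
c_{k,1/d}=-\frac{c_{k,d}}{\sqrt d}.
\end{equation*}
It therefore suffices to treat $d>1$, and to show $c_{k,d}<0$ there.

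\textbf{Step 2 (no standing front for $d\neq1$).} This is the main obstacle. Suppose $c=0$ and $d\neq1$, with $U'<0<V'$. We would try an energy or Pohozaev-type identity. Multiply the first equation by $U'$ and the second by $V'$ and integrate over $\R$. With $F(U)=U^2/2-U^3/3$, the first gives
\begin{equation*}
-F(1)=k\int UVU'\,d\xi ,
\end{equation*}
and the second gives the analogous identity $F(1)=k\int UVV'\,d\xi$. Adding them,
\begin{equation*}
k\int (UV)'\,d\xi=0,
\end{equation*}
which is consistent but gives no information, so the diffusion ratio must enter some other way.

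First try: multiply the first equation by $\xi U'$ and the second by $\xi V'$, integrate, and use the standing-front identities
\begin{equation*}
\tfrac12 U'^2+F(U)-\int kUVU'=\text{const},
\end{equation*}
together with its analogue weighted by $d$. Comparing the two first integrals at $\pm\infty$ should produce a relation such as
\begin{equation*}
(d-1)\int U'V'\,d\xi=0.
\end{equation*}
Since $U'V'<0$ everywhere, this would force $d=1$.

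Fallback: use the rescaled variable $W=V(\sqrt d\,\cdot)$. A standing front for $(1,d)$ then corresponds to a standing front for the system with unequal spatial scales. A sliding or comparison argument against the $d=1$ symmetric standing front, whose components satisfy $U+V$-type relations, would show that the two profiles cannot intersect in the required way.

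\textbf{Step 3 (continuity and one explicit value).} Fix $k>1$. The map $d\mapsto c_{k,d}$ is continuous on $(1,\infty)$, and by Step 2 it never vanishes there. It therefore has a constant sign on $(1,\infty)$. To fix that sign, we would use the explicit travelling front available at a particular parameter value; we expect a $\tanh$-type or polynomial-type ansatz with $U+V$ affine in $U$, which yields an explicit negative speed for some $d>1$. If that value sits at a specific $k_0$ only, we also need continuity in $k$. The region $\{k>1,\ d>1\}$ is connected and $c\neq0$ on it, so the sign is constant on the whole region, hence negative. Step 1 then gives $c_{k,d}>0$ for $0<d<1$, and $c_{k,d}=0$ iff $d=1$, which is \eqref{eq4}.
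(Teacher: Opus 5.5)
Your Steps 1 and 3 have the same architecture as the paper's argument: the reciprocity $c_{k,1/d}=-c_{k,d}/\sqrt d$, continuity of $(k,d)\mapsto c_{k,d}$ on the connected set $(1,\infty)\times(1,\infty)$, and one anchor value. However, Step 2, on which everything rests, is not proved.

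\textbf{Step 2 is missing.} The relation $(d-1)\int_{\R}U'V'\,d\xi=0$ is only what a computation ``should produce'', and the mechanism you propose cannot deliver it. The ``first integrals'' $\tfrac12U'^2+\tfrac12U^2-\tfrac13U^3-k\int^{\xi}UVU'$ are just the equations integrated against $U'$ and $V'$. The system is not of gradient type, since $\partial_V[U(1-U-kV)]=-kU\neq-kV=\partial_U[V(1-V-kU)]$. So evaluating these quantities at $\pm\infty$ only returns the two identities you already derived. The $\xi$-weighted identities bring in $\int_{\R}U'^2$, $\int_{\R}V'^2$ and first moments of the reaction terms, and nothing shows they collapse to a signed multiple of $d-1$. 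The sliding fallback is not specified at all. Without nonexistence of standing fronts for $d\neq1$, continuity gives no control of the sign.

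\textbf{The missing idea.} Cross-multiply: multiply the $U$-equation by $V'$ and the $V$-equation by $U'$, and add. Since $\int_{\R}(U'V')'\,d\xi=0$, the diffusion terms give $(1-d)\int_{\R}U''V'\,d\xi$. For the reaction terms, with $F(U,V)=U(1-U-kV)$ and $G(U,V)=V(1-V-kU)$,
\[
F(U,V)V'+G(U,V)U'=\bigl(UV-U^2V-UV^2\bigr)'+(2-k)\,UV\,(U'+V').
\]
Its integral vanishes precisely because of the identities $\int_{\R}UVU'=-\frac{1}{6k}$ and $\int_{\R}UVV'=\frac{1}{6k}$, which you set aside as uninformative. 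Hence $(1-d)\int_{\R}U''V'=0$, and it remains to show $\int_{\R}U''V'\neq0$. The $V$-equation gives $\int_{\R}U''V'=\frac1d\int_{\R}G(U,V)U'$. The algebraic identity $2G+(1-U-V)^2=(1-U)^2-V^2-2(k-1)UV$, together with the same integrals, then yields $\int_{\R}U''V'=\frac{1}{2d}\int_{\R}(1-U-V)^2(-U')$. Cauchy--Schwarz against $\int_{\R}U(1-U-V)(-U')=\frac{k-1}{6k}$ bounds this below by $\frac{(k-1)^2}{24dk^2}>0$, which forces $d=1$.

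\textbf{The anchor point.} You leave it open, and the ansatz you suggest cannot work. Requiring $U+V$ to be affine in $U$ forces $V\equiv1-U$ by the boundary conditions. Subtracting the two front equations then gives $(1+d)U''+2cU'=0$, so $U'$ is a single exponential and cannot vanish at both ends. The paper instead uses the Rodrigo--Mimura front $U=\phi^2$, $V=1-\phi$, $\phi=(1+e^{\xi/\sqrt6})^{-1}$. It solves the system at $(k,d)=(11/6,11/2)$ with speed $-\sqrt6/12<0$.
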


 \begin{figure}[h!]
 	\centering
 	\begin{tikzpicture}[x=2.0cm,y=1.2cm]
 		\fill[blue!10] (0.15,0) rectangle (1.2,3.6);
 		\fill[red!10] (1.2,0) rectangle (3.8,3.6);
 		\draw[->,thick] (0.15,0) -- (4.0,0) node[right] {$d$};
 		\draw[->,thick] (0.15,0) -- (0.15,3.85) node[above] {$k$};
 		\draw[very thick,black] (1.2,0) -- (1.2,3.6);
 		\draw (1.2,0.08) -- (1.2,-0.08) node[below=3pt] {$1$};
 		\draw (0.23,1) -- (0.07,1) node[left=3pt] {$1$};
 		\node[blue!60!black,align=center] at (0.53,2.0) {$c_{k,d}>0$\\fast $u$\\invades};
 		\node[red!65!black,align=center] at (2.45,2.0) {$c_{k,d}<0$\\fast $v$ invades};
 		\node[rotate=90,fill=white,inner sep=2pt] at (1.2,2.0) {$c_{k,1}=0$};
 		\node[anchor=west] at (0.22,3.5) {$k>1$};
 	\end{tikzpicture}
 	\caption{Complete sign diagram.  The line $d=1$ corresponds to the only standing-front  in the symmetric homogeneous model. }
 	\label{sign}
 \end{figure}

 In the sequel we will sometime use the following notations
 \begin{equation}\label{react}
F(U,V)=U(1-U-kV) \quad \text{ and } \quad G(U,V)=V(1-V-kU).
 \end{equation}
 
 \section{Related works and our contribution} 
 The classical existence theory for bistable travelling waves in competition-diffusion systems goes back to Gardner \cite{gardner1982}. Kan-On \cite{kan1995} established existence and uniqueness of travelling wave solutions for certain competition-diffusion equations. In a more general asymmetric Lotka-Volterra system, they established the monotonicity  of the propagation speed with respect to several reaction parameters, while keeping the diffusion ratio $d>0$ fixed. However, these results do not determine the dependence of the speed on the diffusion ratio $d$. 
 
 Determining the sign of the propagation speed in the Lotka-Volterra competition-diffusion system has therefore been the subject of substantial studies. Rodrigo and Mimura \cite{rodrigo2001} (see also \cite{rodrigo2000}) constructed nine exact families of travelling waves for particular choices of the parameters. Guo and Lin obtained in \cite{guo2013} explicit sufficient conditions determining the sign of the wave speed. Ma, Huang and Ou \cite{ma2019} later established additional explicit parameter regions in which the propagation direction can be determined .
 
 Other contributions treated asymptotic parameter regimes. Alzahrani, Davidson and Dodds proved in \cite{alzahrani2010} that the faster diffuser prevails when the diffusion ratio is sufficiently large. Girardin and Nadin \cite{girardin2015} obtained the same conclusion in the limit of very strong competition ($k\to \infty$). Risler \cite{risler2017} studied a singular regime near  $(d,k)=(1,1)$ and showed under an additional separation of scales,  that increasing mobility is advantageous. Girardin later synthesized these results and numerical evidences in \cite{girardin2019}. All the rigorously studied cases were consistent with the conjecture
\begin{equation*}
 c_{k,d}<0 \qquad\text{for every } d>1 \text{ and } k>1,
\end{equation*}
 but they did not provide a proof throughout the whole parameter region \cite{girardin2019}.
 
 Recent contributions enlarged the regions in which this result is known to hold. Chang, Chen and Wang introduced a minimax characterization of the zero-speed threshold and obtained new explicit
 sufficient conditions \cite{chang2023}. Morita, Nakamura and Ogiwara in \cite{morita2023} established further parameter regions in which the faster diffuser prevails. Xiao  \cite{xiao2025} proved the existence of regimes of positive and negative speed separated by a threshold in one of the competition coefficients. However, the corresponding thresholds were not estimated explicitly.
 
 More recently, Nakamura and Ogiwara developed a comparison method based on the construction of time-independent supersolutions \cite{nakamura2026}. In the symmetric system, they proved that $c_{k,d}<0$ on substantially larger parameter regions whose union is unbounded with respect to both $d$ and $k$ \cite[Theorem 4.1 and Figure 1]{nakamura2026}. Nevertheless, their conditions are sufficient and they explicitly left the global sign problem open. Chen and Wang \cite{chen2026} considered the regime $k\to1^+$. For every fixed $d>1$, they proved that the faster diffuser prevails when $k>1$ is sufficiently close to $1$~\cite[Theorem 1.2]{chen2026}. We mention that their convention for the speed is opposite to ours: if their speed is denoted by $s$, then $s=-c_{k,d}$. Thus, their conclusion $s>0$ corresponds to $c_{k,d}<0$ in our case.  A complete synthesis of regions in which the conjecture has been established is described in  \cite{nakamura2026}. We also mention the work of Alfaro and Xiao \cite{alfaro2023} for the critical case $k=1$.  Consequently, the faster-diffuser theorem had been verified at isolated parameter values, in several explicit interior regions, in the large-diffusion and large-competition regimes, and near the strong-competition boundary. However, a characterization valid throughout the entire parameter region $(d,k)\in(0,\infty)\times(1,\infty)$ is  still unavailable to our knowledge. 
 
The main purpose of this paper is to determine the propagation direction throughout the whole parameter region $(d,k)\in(0,\infty)\times(1,\infty)$. More precisely, we prove that $\sign c_{k,d}=\sign(1-d)$. In ecological terms, when two competing species have identical reaction parameters and differ only in their diffusion rates, the faster diffuser always advances through the contact zone. This establishes the ``Unity is not strength'' result for the symmetric homogeneous Lotka-Volterra system. The main difficulty is to identify the parameter values for which the propagation speed can vanish. One of the key steps which is also of independent interest, is the smooth dependence of the propagation speed and of the wave profile on both the competition intensity and the diffusion ratio. Using the later result, a change  of the sign of the wave speed could occur only through a parameter value at which $c_{k,d}=0$. Our second step is therefore a nonexistence result for monotone standing fronts when $d\neq1$ (see Theorem \ref{zerospeed}). This avoids estimating the speed directly and replaces the parameter-specific inequalities used in the previous literature with a single argument that applies throughout the parameter space.

The rest of the paper is organized as follows.  In Section \ref{smooth}, we recall some well known results on the existence of travelling waves for \eqref{model} and we prove the smooth dependence of the wave speed and the wave profile on both the competition intensity and the diffusion ratio. Section \ref{main1} establishes that  the existence of a standing monotone front is impossible unless $d = 1$. Finally, we prove our main result in Section \ref{mainresult}.
 
 \section{Smooth dependence on the parameters $k$ and $d$}\label{smooth}
 The aim of this section is to prove that the propagation speed $c_{k,d}$ and the wave profile $(U_{k,d}, V_{k,d})$ depend smoothly on $k$ and  $d$. This is the first key step towards Theorem \ref{maintheo}  (only the continuity is actually needed). We mention that Kan-On \cite{kan1995} obtained the $C^1$ dependence of the speed on the reactions parameters at fixed diffusion ratio, together with the monotonicity of the speed with respect to each of these reactions parameters. Their results do not include the diffusion ratio.
 
 For further needs we recall the following now well-known result. Its proof is contained in \cite[Theorem 2.1]{kan1995} (see also \cite[Theorem 2.3]{girardin2015}). 
 The proof of the bounds \eqref{bound} follows from the maximum principle (see e.g., \cite[Lemma 3.1]{alfaro2023}).

\begin{theorem}\label{theo1}
	For every $k>1$ and $d>0$, there exists a unique $c_{k,d}$ for which \eqref{eq1}-\eqref{eq2a} has a solution. Its profile $(U,V)$ is unique up to a translation, belongs to $C^\infty(\R)^2$ and satisfies
	\begin{equation}\label{bound}
		0<U(\xi), V(\xi)<1\quad \text{ for every } \xi\in \R.
	\end{equation}
	Moreover, $-2\sqrt{d}<c_{k,d}<2$. Furthermore, if $c\in\R$ and if $(\widetilde{U},\widetilde{V})\in C^2(\R)^2$ is an arbitrary positive solution of \eqref{eq1} with speed $c$ satisfying
	\begin{equation}\label{limits}
		(\widetilde{U},\widetilde{V})(-\infty)=(1,0),\qquad (\widetilde{U},\widetilde{V})(+\infty)=(0,1)
	\end{equation}
	then $c=c_{k,d}$ holds and there exists $\tau\in\R$ such that
	\begin{equation}\label{transl}
		(\widetilde{U},\widetilde{V})(\xi)=(U,V)(\xi+\tau)\qquad\text{for every }\xi\in\R .
	\end{equation}
\end{theorem}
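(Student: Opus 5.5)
Theorem \ref{theo1} is the classical existence and uniqueness result of Kan-On, so my plan is to retrace its standard proof rather than build a new one. The plan has three parts: (i) a priori properties of any solution, namely the bounds \eqref{bound} and the speed estimate; (ii) existence of a monotone front for a unique speed; (iii) uniqueness of the speed and of the profile up to translation among \emph{all} positive solutions connecting $(1,0)$ to $(0,1)$.

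For (i), I would argue by the maximum principle. If a positive solution $U$ reached a value $\ge 1$ at an interior maximum $\xi_0$, then $U''(\xi_0)\le0$ and $U'(\xi_0)=0$. This would force $U(1-U-kV)\ge 0$ at $\xi_0$, which is impossible when $U\ge1$ and $V>0$; the same argument handles $V$. Smoothness follows by bootstrapping, since the system is a semilinear ODE with polynomial nonlinearity.

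For the speed bounds, I would use a phase-plane/linearization argument at the unstable directions. Near $\xi\to+\infty$ we have $V\to1$ and $U\to0$, and $U$ satisfies approximately $U''+cU'+(1-k)U=0$ with $1-k<0$. A decaying positive $U$ is always possible there, so that end gives no restriction. The restriction comes from the competition structure instead. Multiplying the $U$ equation by $e^{c\xi}U'$ and integrating gives a sign identity. Alternatively, comparing $U$ with the Fisher-KPP equation $U''+cU'+U(1-U)\ge$ LHS gives $c<2$: if $c\ge2$, the supersolution structure of KPP fronts together with a sliding argument would contradict $U(-\infty)=1$ while $U$ decreases. Symmetrically, the $V$ equation divided by $d$ gives $c/\sqrt d>-2$. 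I would make this precise by a sliding comparison with the KPP front of minimal speed.

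For (ii), the standard route is to view the monotone system. The substitution $W=1-V$ turns \eqref{eq1} into a cooperative system with bistable endpoints $(1,1)$ and $(0,0)$, both linearly stable since $k>1$. I would then apply the Volpert--Volpert--Volpert (or Gardner's degree/Conley-index) theory for monotone bistable systems: the existence of a monotone front with some speed $c$ follows from a homotopy to a decoupled problem, or from the approximating bounded-interval method $[-a,a]$ with a normalization $U(0)=1/2$ and $a\to\infty$, using the a priori bounds from (i) to extract limits. Monotonicity \eqref{eq2a} is preserved by the cooperative structure.

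The main obstacle is (iii), uniqueness for arbitrary positive solutions. I would use the sliding method in the cooperative variables $(U,W)$. Suppose $(\widetilde U,\widetilde W)$ has speed $\tilde c\ge c$ (the case $\tilde c\le c$ is symmetric). The steps are:
\begin{enumerate}
\item Using the exponential approach to the stable endpoints (from linearization: hyperbolicity at $(1,0)$ and $(0,1)$ when $k>1$), shift $(U,W)(\cdot+\tau)$ for $\tau$ large so that it lies above $(\widetilde U,\widetilde W)$ on both tails.
\item Decrease $\tau$ to the critical value $\tau^*$ where the two pairs first touch.
\item Apply the strong maximum principle for cooperative systems. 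This forces either $\tilde c=c$ together with the identity \eqref{transl}, or a contradiction.
\end{enumerate}
The delicate point is controlling the tails uniformly during the sliding. There I would invoke the linear stability of the endpoints, so that any contact in the tails is excluded by a local comparison with exponentially decaying sub/super-solutions.
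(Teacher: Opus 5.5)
Your plan is correct and matches the paper. The paper does not reprove this result either: it defers existence, uniqueness of the speed and of the profile, and the bounds $-2\sqrt d<c_{k,d}<2$ to Kan-On's classical theorem (see also Girardin--Nadin), and it obtains \eqref{bound} from the maximum principle, as in your part (i).

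Two details to fix when you write it out:
\begin{itemize}
\item In the KPP sliding, the right tail is controlled because $U$ decays at $+\infty$ at rate $\frac{c+\sqrt{c^2+4(k-1)}}{2}>1$, which is faster than the KPP front. You need this rate comparison because there is no tail maximum principle near the unstable state $0$ of KPP.
\item In the cooperative variables $(U,1-V)$ the front is decreasing. So $(U,W)(\cdot+\tau)$ lies above $(\widetilde U,1-\widetilde V)$ when $\tau\to-\infty$, not when $\tau\to+\infty$.
\end{itemize}
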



Let $\Phi:=(U_{k, d}, V_{k,d})^T$ be a classical front  satisfying \eqref{eq1}-\eqref{eq2a} with speed $c:=c_{k,d}$.  We write \eqref{eq1}-\eqref{eq2} as 
\begin{equation}\label{comp}
	\D_d \Phi''+c \Phi^{\prime}+\F_k(\Phi)=0, \qquad \Phi(-\infty) = E_- := \begin{pmatrix}1\\0\end{pmatrix}, \quad \Phi(+\infty) = E_+ := \begin{pmatrix}0\\1\end{pmatrix}, 
\end{equation}
where
\begin{equation}\label{comp1}
\D_d = \begin{pmatrix}1&0\\0&d\end{pmatrix}
\qquad\text{and}\qquad
\F_k(\Phi) = \begin{pmatrix} U(1-U-kV) \\ V(1-V-kU)\end{pmatrix}.
\end{equation}

\subsubsection*{Notations}Throughout the paper, we use standard Sobolev spaces notations. We write $L^2(\R)^2:=L^2(\R;\R^2)$ and $H^2(\R)^2:=H^2(\R;\R^2)$. We denote by $C_b(\R)^2:=C_b(\R;\R^2)$ the space of bounded continuous functions from $\R$ to $\R^2$, and by $\BUC:=\BUC(\R;\R^2)$ the space of bounded uniformly continuous functions from $\R$ to $\R^2$. $C_b^1(\R)^2:=C_b^1(\R; \R^2)=\left\{h \in C^1(\R; \R^2): h\right.$ and $h^{\prime}$ are bounded $\}$. For $z\in\R^2$, $\|z\|$ denotes the Euclidean norm. For $f,g\in L^2(\R)^2$, we denote their $L^2$ inner product by $\langle f,g\rangle_{L^2}$. We denote by $\|\cdot\|_{L^\infty}$ the norm in $L^\infty(\R)^2$ and by $\|\cdot\|_{H^2}$ the norm in $H^2(\R)^2$.  For a linear operator $T$, we write $\ker T$ and $\operatorname{Ran}T$ for its kernel and range, respectively, and $\operatorname{span}\{h\}$ for the linear space generated by $h$. Finally, $D\F(\Phi)$ denotes the Jacobian matrix of $\F$ evaluated at $\Phi$.

We have  the following result.
\begin{theorem}\label{theosmooth} Let $k>1$ and $d>0$. Let  $c_{k,d}$ be the speed of the unique monotone front $\Phi_{k, d}=(U_{k,d}, V_{k,d})^T$ of \eqref{eq1}-\eqref{eq2} associated with $(k,d)$ and normalized by $U_{k,d}(0)=\frac{1}{2}$. Then, the speed map
	\begin{equation*}
		(k,d)\longmapsto c_{k,d}
	\end{equation*}
	is $C^\infty$ on $(1,\infty)\times(0,\infty)$. 
	Moreover, for every
	$(k_0,d_0)\in(1,\infty)\times(0,\infty)$, there exists a neighbourhood
	$\V_0$ of $(k_0,d_0)$ such that the map
	\begin{equation*}
		(k,d)\longmapsto
		\Phi_{k,d}-\Phi_{k_0,d_0}\in H^2(\R)^2
	\end{equation*}
	is of class $C^\infty$ on $\V_0$. 
	
\end{theorem}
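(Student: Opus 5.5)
We will prove the statement with the implicit function theorem applied to a map that encodes both the profile and the speed. Fix $(k_0,d_0)$ and set $\Phi_0:=\Phi_{k_0,d_0}$, $c_0:=c_{k_0,d_0}$. Look for the front at $(k,d)$ in the form $\Phi=\Phi_0+\varphi$ with $\varphi\in H^2(\R)^2$, normalized by the scalar condition $\varphi_1(0)=0$ (which encodes $U(0)=\frac12$; this functional is continuous on $H^2\subset C^1_b$). Define
\begin{equation*}
\mathcal{G}(\varphi,c;k,d):=\Big(\D_d(\Phi_0+\varphi)''+c(\Phi_0+\varphi)'+\F_k(\Phi_0+\varphi),\ \varphi_1(0)\Big)\in L^2(\R)^2\times\R ,
\end{equation*}
defined on $H^2(\R)^2\times\R\times(1,\infty)\times(0,\infty)$.

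\textbf{Well-definedness and smoothness.} At $\varphi=0$ the first component equals $(\D_d-\D_{d_0})\Phi_0''+(c-c_0)\Phi_0'+\F_k(\Phi_0)-\F_{k_0}(\Phi_0)$.

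1. Since $\Phi_0$ converges exponentially to the hyperbolic equilibria $E_\pm$, the functions $\Phi_0',\Phi_0''$ lie in $L^2$.
2. $\F_k(\Phi_0)-\F_{k_0}(\Phi_0)=(k_0-k)(U_0V_0,U_0V_0)^T$ is in $L^2$, because $U_0V_0$ decays exponentially at both ends.
3. $\F_k$ is polynomial and $H^2\hookrightarrow L^\infty$ is an algebra embedding, so $\varphi\mapsto\F_k(\Phi_0+\varphi)-\F_k(\Phi_0)$ is a $C^\infty$ map $H^2\to L^2$.
4. All parameter dependences are polynomial.

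Hence $\mathcal{G}$ is $C^\infty$, and $\mathcal{G}(0,c_0;k_0,d_0)=0$.

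\textbf{Invertibility of the linearization (key step).} Write $L:=\D_{d_0}\partial_\xi^2+c_0\partial_\xi+D\F_{k_0}(\Phi_0)$ as an operator $H^2\to L^2$. The derivative of $\mathcal{G}$ in $(\varphi,c)$ is $(\psi,\gamma)\mapsto(L\psi+\gamma\Phi_0',\ \psi_1(0))$, and we must show this is an isomorphism $H^2\times\R\to L^2\times\R$.

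1. \emph{Fredholm index zero.} The limiting matrices $D\F(E_\pm)$ have eigenvalues $-1$ and $1-k<0$. The constant-coefficient limit operators therefore have no spectrum on $i\R$ (their symbols $-\D\omega^2+ic_0\omega+D\F(E_\pm)$ are invertible for all real $\omega$). By the Palmer/Henry theory, $L$ is Fredholm, and the index is zero by a homotopy argument or by counting unstable dimensions at $\pm\infty$.
2. \emph{Kernel.} The system is cooperative after the change of variables $(U,V)\mapsto(U,-V)$. For monotone bistable fronts of such systems the kernel of $L$ is $\mathrm{span}\{\Phi_0'\}$; this is the standard Kan-On / Volpert argument, proved by a sliding or comparison argument using $U_0'<0<V_0'$. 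I expect this step to be the main obstacle. I would invoke it from \cite{kan1995,volpert1994}, or prove it directly: if $\psi\in\ker L$, take the largest $\tau$ with $\Phi_0'-\tau\psi$ sign-ordered, and conclude with the strong maximum principle, using the exponential decay at the ends.
3. \emph{Simplicity.} Apply the same argument to the adjoint $L^*$, which is again cooperative in the same order, to get $\ker L^*=\mathrm{span}\{\Psi^*\}$ with $\Psi^*$ of strict sign $(-,+)$. This gives $\langle\Phi_0',\Psi^*\rangle_{L^2}\neq0$, since both components contribute with the same sign. Therefore $\Phi_0'\notin\operatorname{Ran}L=(\ker L^*)^\perp$.
4. \emph{Conclusion.} The bordered operator is injective: $L\psi+\gamma\Phi_0'=0$ forces $\gamma=0$ by step 3. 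Then $\psi=a\Phi_0'$ by step 2, and $\psi_1(0)=aU_0'(0)=0$ with $U_0'(0)\neq0$ gives $a=0$. The bordered operator is also surjective, since it has index zero: we added one dimension in the domain and one in the target.

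\textbf{Conclusion via the implicit function theorem.} The implicit function theorem gives a neighbourhood $\V_0$ and a $C^\infty$ map $(k,d)\mapsto(\varphi_{k,d},c(k,d))$ solving $\mathcal{G}=0$. It remains to identify $\Phi_0+\varphi_{k,d}$ with $\Phi_{k,d}$.

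1. By continuity in $H^2\subset C^1_b$, the function $\Phi_0+\varphi_{k,d}$ is close to $\Phi_0$ uniformly, and $\varphi_{k,d}$ tends to $0$ at $\pm\infty$. So it solves \eqref{eq1} with the limits \eqref{limits}.
2. Positivity: on compacts it follows from closeness to $\Phi_0$. Near $\pm\infty$, each vanishing component satisfies a scalar linear equation near a hyperbolic rest point, and it cannot change sign there (use the maximum principle, or the fact that the decay is along the principal eigendirection).
3. The uniqueness part of Theorem \ref{theo1} then gives $c(k,d)=c_{k,d}$ and $\Phi_0+\varphi_{k,d}=\Phi_{k,d}(\cdot+\tau)$. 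The normalization $U(0)=\frac12$, together with strict monotonicity, forces $\tau=0$.

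This yields both the smoothness of $c_{k,d}$ and the smoothness of $\Phi_{k,d}-\Phi_{k_0,d_0}$ in $H^2$ on $\V_0$. Since $(k_0,d_0)$ was arbitrary, the speed map is $C^\infty$ on the whole of $(1,\infty)\times(0,\infty)$.
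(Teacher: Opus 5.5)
Your proposal is correct and is essentially the paper's proof. Your $\mathcal{G}$ is the paper's $\H$ with the same normalization; you get Fredholm index zero from hyperbolicity at $E_\pm$ and take $\ker\L_0=\operatorname{span}\{\Phi_0'\}$ and $\Phi_0'\notin\operatorname{Ran}\L_0$ from the Kan-On theory; then, as in the paper, you use the bordered-operator isomorphism and the implicit function theorem, positivity via a half-line maximum principle, and identification through uniqueness and $U(0)=\frac12$.

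The only soft spot is your step 3. The sliding argument cannot be rerun on $L^*$, because no sign-ordered element of $\ker L^*$ is known a priori. The one-signed $\Psi^*$ should instead come from a Krein--Rutman argument for the positive resolvent of the cooperative operator, or you can simply cite the algebraic simplicity of $0$ from \cite{kan1996}, as the paper does.
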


Before proving the Theorem \ref{theosmooth}, we state and prove the following results.

\begin{lemma}\label{decay} 	 Let $k_0>1$ and $d_0>0$ and let $(\Phi_0, c_0)$ be a monotone front associated with $k_0$ and $d_0$. Then there exists $C,\eta>0$ such that
	\begin{equation}\label{cy0}
		\|\Phi_0(\xi)-E_-\| + \|\Phi_0'(\xi)\|  \leq C e^{\eta\xi}, \qquad \xi\leq 0
	\end{equation}
	and 
	\begin{equation}\label{cy1}
		\|\Phi_0(\xi)-E_+\| + \|\Phi_0'(\xi)\|  \leq C e^{-\eta\xi}, \qquad \xi\geq 0.
	\end{equation}
	Moreover, for every integer $m\geq 1$, there exists $C_m>0$ such that
	\begin{equation}\label{generaldecay}
		\|\Phi_0^{(m)}(\xi)\|\leq C_me^{-\eta|\xi|}, \qquad \text{ for all } \xi\in \R.
	\end{equation}
	In particular,
	\begin{equation}\label{cy2}
		\Phi'_0\in H^2(\R)^2, \qquad \Phi_0'' \in L^2(\R)^2, \qquad \F_{k}(\Phi_0)\in L^2(\R)^2,
	\end{equation}
	for every fixed $k>1$. 
\end{lemma}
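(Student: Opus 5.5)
The plan is to linearize the travelling-wave system \eqref{eq1} at each end state and use hyperbolicity of the endpoints together with stable/unstable manifold theory. I will write \eqref{eq1} as a first-order system in $\R^4$ for $Y=(U,U',V,V')$:
\begin{equation*}
U'=P,\quad P'=-cP-F(U,V),\quad V'=Q,\quad Q'=-\tfrac{1}{d}\bigl(cQ+G(U,V)\bigr).
\end{equation*}

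\emph{Behaviour near $E_-=(1,0)$.} The Jacobian of $\F_{k_0}$ at $E_-$ is upper triangular with diagonal entries $-1$ and $1-k_0<0$. The linearized first-order system therefore splits into two scalar second-order problems:
\begin{itemize}
\item $\mu^2+c_0\mu-1=0$, which has one strictly positive and one strictly negative root;
\item $d_0\mu^2+c_0\mu+(1-k_0)=0$, which also has roots of opposite signs, since their product is $(1-k_0)/d_0<0$.
\end{itemize}
Hence the equilibrium $(1,0,0,0)$ is hyperbolic, with a two-dimensional unstable manifold. As $\xi\to-\infty$ the orbit $Y_0(\xi)=(U_0,U_0',V_0,V_0')(\xi)$ converges to this equilibrium, by \eqref{eq2} together with the boundedness of derivatives coming from interior elliptic/ODE estimates. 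It therefore lies on the unstable manifold.

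By the stable/unstable manifold theorem (or the Hartman–Grobman linearization), every orbit on the unstable manifold satisfies $\|Y(\xi)-Y_-\|\le Ce^{\eta\xi}$ for $\xi\le \xi_1$. Here $\eta$ is any number below the smallest positive eigenvalue, which is
\begin{equation*}
\min\left\{\frac{-c_0+\sqrt{c_0^2+4}}{2},\ \frac{-c_0+\sqrt{c_0^2+4d_0(k_0-1)}}{2d_0}\right\}>0.
\end{equation*}
On the compact interval $[\xi_1,0]$ the bound extends by continuity after enlarging $C$. This gives \eqref{cy0}.

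\emph{Behaviour near $E_+=(0,1)$.} The argument is symmetric. The eigenvalue equations are now $\mu^2+c_0\mu+(1-k_0)=0$ and $d_0\mu^2+c_0\mu-1=0$, and $(0,0,1,0)$ is again hyperbolic. The orbit converges to it as $\xi\to+\infty$, hence lies on the stable manifold, and decays exponentially. This gives \eqref{cy1}. I take $\eta$ to be the minimum of the two rates.

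\emph{Higher derivatives.} I will argue by induction on $m$. The case $m=1$ is already contained in \eqref{cy0}–\eqref{cy1}. For $m=2$, I use the equation directly:
\begin{equation*}
\Phi_0''=-\D_{d_0}^{-1}\bigl(c_0\Phi_0'+\F_{k_0}(\Phi_0)\bigr).
\end{equation*}
Since $\F_{k_0}(E_\pm)=0$ and $\F_{k_0}$ is smooth, hence locally Lipschitz on the bounded range $[0,1]^2$, I get $\|\F_{k_0}(\Phi_0)\|\le L\|\Phi_0-E_\pm\|$. This yields the exponential bound for $\Phi_0''$. For $m\ge2$, I differentiate the equation $m-2$ times. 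The expression $(\F_{k_0}(\Phi_0))^{(m-2)}$ is a polynomial in $\Phi_0,\dots,\Phi_0^{(m-2)}$. For $m=2$ it vanishes at the endpoints, as shown above. For $m\ge3$, every term of it contains at least one factor $\Phi_0^{(j)}$ with $j\ge1$; by induction each such factor is bounded by $Ce^{-\eta|\xi|}$ and the other factors are bounded. This gives \eqref{generaldecay}.

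\emph{Integrability.} Exponential decay implies square integrability. Hence $\Phi_0',\Phi_0'',\Phi_0'''\in L^2$, so $\Phi_0'\in H^2(\R)^2$. For a fixed $k>1$, I will show $\F_k(\Phi_0)\in L^2$ as follows.
\begin{itemize}
\item $\F_k(E_\pm)=0$ for every $k$, not only for $k_0$: at $E_-$ the first component is $1\cdot(1-1-0)=0$ and the second has the factor $V=0$, and symmetrically at $E_+$.
\item The Lipschitz bound then gives $\|\F_k(\Phi_0)\|\le L_k\|\Phi_0-E_\pm\|$ on each half-line, which is exponentially decaying.
\end{itemize}

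\emph{Main obstacle.} The main technical point is justifying that the orbit enters the local unstable and stable manifolds, i.e. that $\Phi_0'\to0$ at $\pm\infty$, and identifying uniform rates. I will get $\Phi_0'\to0$ from monotonicity and boundedness: $U_0,V_0$ are monotone and converge, and $\Phi_0''$ is bounded by the equation. A bounded second derivative together with convergence of $\Phi_0$ forces $\Phi_0'\to0$, by a Barbalat-type argument.
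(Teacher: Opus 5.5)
Your proposal is correct and follows the paper's proof essentially step by step. It uses the same first-order reformulation and the same factorized characteristic polynomials at $E_\pm$, which give hyperbolicity with two stable and two unstable directions. It then applies the stable/unstable manifold theorem to get \eqref{cy0}--\eqref{cy1}, and runs the same induction through $\Phi_0''=-\D_{d_0}^{-1}(c_0\Phi_0'+\F_{k_0}(\Phi_0))$ to obtain \eqref{generaldecay} and \eqref{cy2}. Your Barbalat-type argument that $\Phi_0'\to0$ at $\pm\infty$ (so that the orbit really lies on the local unstable/stable manifolds), and the extension of the bound to the compact interval $[\xi_1,0]$, fill in details that the paper leaves implicit.
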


\begin{proof}
	The proof of \eqref{cy0}-\eqref{cy1} follows essentially similar arguments as in \cite[Lemmas 3.2 and 3.6]{gardner1982}. We provide here the sketch of the proof. We let $Y=(\Phi_0,\Phi_0')\in \R^4$. Then, we can rewrite \eqref{comp} as the following first-order system 
	\begin{equation}\label{firstordersyst}
		Y'=\mathcal{B}(Y), \qquad  \mathcal{B}\begin{pmatrix}z\\p\end{pmatrix}= \begin{pmatrix} p \\ -\D_{d_0}^{-1}\left[c_0p+\F_{k_0}(z)\right]\end{pmatrix}.
	\end{equation}
	The characteristic polynomial of the linearization at $(E_-, 0)$ is
	\begin{equation}\label{charact1}
		(r^2+c_0r-1)(d_0r^2+c_0r+1-k_0)=0,
	\end{equation}
	and at $(E_+,0)$ 
	\begin{equation}\label{charact2}
		(r^2+c_0r+1-k_0)(d_0r^2+c_0r-1)=0.
	\end{equation}
	For $k_0>1$ and $d_0>0$, every quadratic term in \eqref{charact1}-\eqref{charact2} has two real nonzero roots of opposite sign. So the two endpoint equilibria of \eqref{firstordersyst} are hyperbolic, with two stable and two unstable eigenvalues. The exponential estimates in the stable and unstable manifold theorem give \eqref{cy0}-\eqref{cy1} (possibly after decreasing $\eta$).  We now prove \eqref{generaldecay}. For $m=1$, the estimate follows directly from \eqref{cy0}-\eqref{cy1}. Next, we write
	\begin{equation}\label{neweq0}
	\Phi_0''=-\D_{d_0}^{-1}\left[c_0\Phi_0'+\F_{k_0}(\Phi_0)\right].
	\end{equation} 
	Since $\F_{k_0}(E_{\pm})=0$ and $\F_{k_0}$ is $C^1$, we can deduce from \eqref{cy0}-\eqref{cy1} that 
	$|\F_{k_0}(\Phi_0(\xi))| \leq C e^{-\eta|\xi|}, \;\xi \in \mathbb{R}$. Consequently, $|\Phi_0''(\xi)| \leq C_2 e^{-\eta|\xi|}$ for all $\xi \in \mathbb{R}$ and for some $C_2>0$. Hence \eqref{generaldecay} holds for $m=2$. Differentiating \eqref{neweq0} and using the fact that $\Phi_0$ bounded implies $D \F_{k_0}(\Phi_0)$ is also bounded, we deduce \eqref{generaldecay} for $m=3$. 
	Note that $\F_{k_0}$ is quadratic, so its derivatives of order three and higher vanish and its second derivative is constant.  Therefore,  differentiating \eqref{neweq0} and using an induction argument, we can deduce \eqref{generaldecay} for $m\geq4$.  Finally, using \eqref{generaldecay}, we deduce the first two assertions in \eqref{cy2}. Let $k>1$, we have $\F_{k}\left(E_{ \pm}\right)=0$ and $\F_{k}$ is of class $C^1$. Thus,  the exponential estimates \eqref{cy0}-\eqref{cy1} imply that 
	$\F_{k}(\Phi_0) \in L^2(\mathbb{R})^2$. This completes the proof.
\end{proof}

\begin{proposition}\label{propposi}
	Let $k_0>1$ and $d_0>0$, and let $\Phi_0=(U_0,V_0)$ be a monotone front of \eqref{eq1}-\eqref{eq2} associated with $(k_0,d_0)$ and  normalized by $U_0(0)=\frac{1}{2}$. Then there exist $\varepsilon_0>0$ and $\delta_0>0$, depending only on $k_0$ and $d_0$, with the following property. Let $k>1$, $d>0$ and $c\in\R$ be such that $|k-k_0|\leq\delta_0$, and let $\Phi=(U,V)\in C^2(\R)^2$ be a solution of \eqref{eq1}-\eqref{eq2} associated with $(k,d)$ and  with speed $c$ such that
	\begin{equation}\label{close}
\|\Phi-\Phi_0\|_{L^\infty}\leq\varepsilon_0.
	\end{equation}
	Then, $U>0$ and $V>0$ on $\R$.
\end{proposition}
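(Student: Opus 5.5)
The plan is a maximum-principle argument on each of the two tails, combined with a direct lower bound on a compact middle interval. Throughout, $U$ and $V$ are treated as solutions of the \emph{linear} equations
\begin{equation*}
U''+cU'+a(\xi)U=0,\qquad dV''+cV'+b(\xi)V=0,
\end{equation*}
with $a=1-U-kV$ and $b=1-V-kU$. Only the sign of $a$ (resp.\ $b$) at a hypothetical negative minimum matters, so $c$ and $d$ play no role. This is consistent with the statement restricting only $k$.

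\textbf{Choice of constants.} Set $\delta_0=(k_0-1)/2$, so that $k\geq k_1:=(k_0+1)/2>1$ whenever $|k-k_0|\leq\delta_0$. Then fix $\varepsilon_0>0$ so small that
\begin{equation*}
1-k(1-2\varepsilon_0)+2\varepsilon_0<0 \quad\text{for all } k\in[k_1,k_0+\delta_0].
\end{equation*}
This is possible because $1-k<0$ uniformly on this compact $k$-range. Using \eqref{eq2} for $\Phi_0$, choose $R>0$ with $\|\Phi_0(\xi)-E_+\|\leq\varepsilon_0$ for $\xi\geq R$ and $\|\Phi_0(\xi)-E_-\|\leq\varepsilon_0$ for $\xi\leq -R$. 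By \eqref{bound} and continuity, $m:=\min_{[-R,R]}\min(U_0,V_0)>0$. Shrink $\varepsilon_0$ further so that $\varepsilon_0<m/2$. (Strictly, $R$ depends on $\varepsilon_0$; this is harmless, since fixing $\varepsilon_0$ via the first condition and then taking $\min(\varepsilon_0,m/2)$ only enlarges $R$'s validity.) All constants depend only on $k_0,d_0$ through $\Phi_0$.

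\textbf{Middle interval.} On $[-R,R]$, \eqref{close} gives $U\geq U_0-\varepsilon_0\geq m/2>0$, and likewise $V>0$.

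\textbf{Tail $\xi>R$ for $U$ (the main step).} Here \eqref{close} gives $V\geq 1-2\varepsilon_0$ and $|U|\leq 2\varepsilon_0$. Hence
\begin{equation*}
a(\xi)\leq 1+2\varepsilon_0-k(1-2\varepsilon_0)<0 .
\end{equation*}
Suppose $U$ takes a negative value on $(R,\infty)$. Since $U(R)>0$ and $U(+\infty)=0$, $U$ attains a negative minimum at some $\xi_*>R$, where $U'(\xi_*)=0$ and $U''(\xi_*)\geq0$. Then
\begin{equation*}
0=U''+cU'+aU\geq a(\xi_*)U(\xi_*)>0,
\end{equation*}
a contradiction, so $U\geq0$ on $(R,\infty)$. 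For strictness, if $U(\xi_1)=0$ at some $\xi_1>R$, then $\xi_1$ is a minimum, so $U'(\xi_1)=0$. Uniqueness for the linear ODE gives $U\equiv0$, contradicting $U(R)>0$.

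\textbf{Tail $\xi<-R$ for $V$.} The argument is symmetric: there $U\geq1-2\varepsilon_0$ and $|V|\leq2\varepsilon_0$, so $b<0$. At a negative minimum, $dV''\geq0$ (using $d>0$), and the same contradiction follows; $V(-\infty)=0$ and $V(-R)>0$ supply the interior minimum.

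\textbf{The remaining tails.} On $\xi<-R$ we have $U\geq 1-2\varepsilon_0>0$, and on $\xi>R$ we have $V\geq1-2\varepsilon_0>0$, directly from \eqref{close}.

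The only delicate point is ensuring $a<0$ and $b<0$ \emph{uniformly} in $k$ on the tails. This is exactly why $\delta_0$ must keep $k$ bounded away from $1$, and why $\varepsilon_0$ is chosen after $\delta_0$.
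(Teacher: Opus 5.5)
Your proof is correct and follows the paper's argument. You use the same $\delta_0=(k_0-1)/2$ and the same split into a compact middle interval, where closeness to $\Phi_0$ gives positivity directly, and tails where the coefficient $1-U-kV$ (resp.\ $1-V-kU$) is uniformly negative, closed by a maximum-principle argument on the half-lines. The differences are cosmetic: you run the negative-minimum argument directly, which suffices because your coefficient is strictly negative, whereas the paper invokes its appendix Lemma~\ref{lempos} through the strong maximum principle; and the paper avoids your $R$-versus-$\varepsilon_0$ circularity by first fixing an auxiliary $\eta$ depending only on $k_0$, then choosing $M$, then setting $\varepsilon_0=\min\{m_0/2,\eta\}$.
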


\begin{proof} We first note that the normalization $U_0(0)=\frac{1}{2}$ selects a unique translate of the monotone front (see e.g., \cite[Theorem 2.3]{girardin2015}). Therefore, $\Phi_0$ is uniquely determined by the parameters $(k_0, d_0)$. Let $\delta_0:=\frac{k_0-1}{2}$ and set $k_1:=\min\left\{\frac{k_0+1}{2},2\right\}$. Then $1<k_1\leq 2$ and for all $k$ such that $|k-k_0|\leq\delta_0$, we have $k\geq k_1$.	 Set $\eta:=\frac{k_1-1}{8}$, then $0<\eta\leq \frac{1}{8}$. 
	Since $\Phi_0$ satisfies the boundary conditions $\eqref{eq2}$, there exists $M>0$ such that
	\begin{equation}\label{mum0}
		U_0\geq 1-\eta \ \text{ on } (-\infty,-M],\qquad V_0\geq 1-\eta \ \text{ on } [M,+\infty).
	\end{equation}
	The functions $U_0$ and $V_0$ are continuous and positive on $\R$, so by compactness
	\begin{equation}\label{defm0}
		m_0:=\min_{\xi\in[-M,M]}\min\{U_0(\xi),V_0(\xi)\}>0 .
	\end{equation}
	We note that the number $M$ in \eqref{mum0} and the number $m_0$ in \eqref{defm0} depend on $k_0$ and on $\Phi_0$. Since $\Phi_0$ is uniquely determined by $(k_0, d_0)$, we have $\varepsilon_0=\varepsilon_0(k_0, d_0)$ and $\delta_0=\delta_0(k_0)$. 
	Finally, we take
	\begin{equation}\label{defeps0}
		\varepsilon_0:=\min\left\{\frac{m_0}{2}, \eta\right\}.
	\end{equation}
Now from \eqref{close}, we can deduce that
	\begin{equation}\label{compa0}
		|U(\xi)-U_0(\xi)|\leq\varepsilon_0\quad\text{and}\quad |V(\xi)-V_0(\xi)|\leq\varepsilon_0,\qquad \xi\in\R .
	\end{equation}
	By combining \eqref{defm0}-\eqref{compa0}, we obtain on $[-M,M]$ that
	\begin{equation}\label{ewen1}
		U\geq U_0-\varepsilon_0\geq m_0-\frac{m_0}{2}=\frac{m_0}{2}>0,\qquad
		V\geq V_0-\varepsilon_0\geq\frac{m_0}{2}>0.
	\end{equation}
	Moreover, combining \eqref{mum0}-\eqref{compa0}, we deduce that
	\begin{equation}\label{ewen2}
		U\geq U_0-\varepsilon_0\geq 1-\eta-\varepsilon_0\geq1-2\eta>0\qquad\text{ on }(-\infty,-M],
	\end{equation}
	and in the same way
	\begin{equation}\label{ewen2a}
		V\geq V_0-\varepsilon_0\geq 1-\eta-\varepsilon_0\geq 1-2\eta>0\qquad\text{ on }[M,+\infty).
	\end{equation}
	It remains to show that $U>0$ on $[M,+\infty)$ and that $V>0$ on $(-\infty,-M]$.
	We set
	\begin{equation}\label{defqs}
		q_U:=1-U-kV,\qquad q_V:=1-V-kU,
	\end{equation}
	which are continuous on $\R$. On $[M,+\infty)$ we have $U\geq U_0-\varepsilon_0\geq-\varepsilon_0\geq -\eta$ (because $U_0>0$)
and  from above, we have $V\geq 1-2\eta>0$. Since $k\geq k_1$, we obtain
	\begin{equation}\label{ewen3}
		q_U\leq 1+\eta-k_1(1-2\eta)= -(k_1-1)+\eta(1+2k_1)\leq -8\eta+5\eta=-3\eta<0\qquad\text{on }[M,+\infty).
	\end{equation}
	Similarly, on $(-\infty,-M]$ we have $V\geq V_0-\varepsilon_0\geq-\varepsilon_0\geq -\eta$, and
	$U\geq 1-2\eta>0$, so with the same computations as above, we obtain
	\begin{equation}\label{ewen4}
		q_V\leq 1+\eta-k_1(1-2\eta)\leq -3\eta<0\qquad\text{on }(-\infty,-M].
	\end{equation}
	
On the one hand, using \eqref{defqs}, we write the first equation in \eqref{eq1} as 
	\begin{equation}\label{eqU}
		U''+cU'+q_UU=0\qquad\text{on }[M,+\infty)
	\end{equation}
with the following conditions
	\begin{equation}\label{bcU}
		U(M)\geq\frac{m_0}{2}>0\qquad\text{and}\qquad U(+\infty)=0,
	\end{equation}
	where the first inequality comes from above and the second one from \eqref{eq2}. The coefficient $q_U$ is continuous and negative on $[M,+\infty)$ by \eqref{ewen3}. Therefore,  we apply the Lemma \ref{lempos} to obtain
	\begin{equation}\label{ewen5}
		U>0\qquad\text{on }[M,+\infty).
	\end{equation}
	On the other hand, from the second equation of $\eqref{eq1}$, we have
	\begin{equation}\label{eqV}
		dV''+cV'+q_VV=0\qquad\text{on }(-\infty,-M],
	\end{equation}
	together with
	\begin{equation}\label{bcV}
		V(-M)\geq\frac{m_0}{2}>0\qquad\text{and}\qquad V(-\infty)=0.
	\end{equation}
The coefficient $q_V$ is continuous and negative on $(-\infty,-M]$ by \eqref{ewen4}. Since $d>0$, we apply again the Lemma \ref{lempos} with $w(\xi)=V(-\xi)$ to obtain that
	\begin{equation}\label{ewen6}
		V>0\qquad\text{on }(-\infty,-M].
	\end{equation}
	Combining  \eqref{ewen1}, \eqref{ewen2}, \eqref{ewen2a}, \eqref{ewen5} and \eqref{ewen6} lead us to $U>0$ and $V>0$ on $\R$. This completes the proof.
\end{proof}

We can now prove the main result of this section. 
\begin{proof}[Proof of Theorem \ref{theosmooth}]	We fix $\left(k_0, d_0\right) \in(1, \infty) \times(0, \infty)$  and we let $c_0:=c_{k_0, d_0}$. We denote by  $\Phi_0=\left(U_0, V_0\right)^T:=\Phi_{k_0, d_0}$ the corresponding monotone front satisfying \eqref{comp}-\eqref{comp1} and normalized by $U_0(0)=\frac{1}{2}$.  We first observe that the set of fronts satisfying  \eqref{comp}-\eqref{comp1} is not a vector space. We use $\Phi=\Phi_0+w$, where $w$ will be required to vanish at both endpoints. We recall that in dimension $1$, $H^2(\mathbb{R})^2\hookrightarrow C^1_b(\mathbb{R})^2$; so if $w\in H^2(\mathbb{R})^2$ then $w(\xi), w'(\xi)\to 0$ as $|\xi|\to\infty$. Next,  set $w=(w_1,w_2)^T$ and introduce the map 
\begin{equation}\label{defh}
	\H(w,c,k,d) := \Big(\D_d(\Phi_0+w)'' + c(\Phi_0+w)' + \F_k(\Phi_0+w),\ w_1(0)\Big)
\end{equation}
from $H^2(\mathbb{R})^2\times\mathbb{R}\times\mathbb{R}^2 \longrightarrow L^2(\mathbb{R})^2\times\mathbb{R}$.	
We observe that the map  $\ell: w\mapsto \ell(w):=w_1(0)$ defined from $H^2(\R)^2 \to \R$ is linear and bounded because $|\ell(w)|=|w_1(0)| \leq\left\|w_1\right\|_{L^{\infty}} \leq C_S\left\|w_1\right\|_{H^1} \leq C_S\|w\|_{H^2}$. Hence, the second component of $\H$ is well defined.
Next, from the Lemma \ref{decay}, we can deduce with straightforward computations that the first component of $\H$ is well defined. Thus $\H$ is well defined. Moreover, $\H$ is polynomial in $(w,c,k)$ and affine in $d$, with bounded multilinear terms. Hence $\H$ is of class $C^\infty$. In addition,
\begin{equation}\label{cy3}
\H(0,c_0,k_0,d_0) = (0,0).
\end{equation}
Moreover, the derivative of $\H(w,c,k,d)$ with respect to $(w,c)$ at $(0,c_0,k_0,d_0)$ is given by:
\begin{equation}\label{deriv}
\A(h,s) := D_{(w,c)}\H(0,c_0,k_0,d_0) = \big(\L_0 h + s\Phi_0',h_1(0)\big),
\end{equation}
where
\begin{equation}\label{defl}
\L_0h:=\D_{d_0}h'' + c_0 h' + D\F_{k_0}(\Phi_0)h.
\end{equation} 
We show that the linear bounded operator  $\A: H^2(\R)^2\times\R \longrightarrow L^2(\R)^2\times\R$ is an isomorphism. 

Before proceeding, we adapt some facts established in \cite{kan1996} concerning the operator $\L_0$. It is worth mentioning that their results is shown to hold on $\BUC$, but here the operator $\L_0:H^2(\R)^2\to L^2(\R)^2$. After the reflection $\xi \mapsto-\xi$, which changes the wave speed from $c_0$ to  $-c_0$, our wave system is exactly a particular case of theirs, with $a=1$ and $b=c=k_0$.  We observe that a differentiation of the equation in \eqref{comp} gives that $\L_0\Phi_0'=0$. In addition, if $h\in H^2(\R)^2$, then $h,h'\in \BUC$ and $\L_0h=0$ implies $h'' = -\D_{d_0}^{-1}\big(c_0h' + DF_{k_0}(\Phi_0)h\big) \in \BUC$ (note that $d_0>0$). So, $h$ is a bounded classical solution of $\L_0 h=0$. Consequently, after the reflection $\xi \mapsto-\xi$,  \cite[Lemma 3.3]{kan1996} applies and gives us 
\begin{equation}\label{cy4}
\operatorname{ker} \L_0=\operatorname{span}\{\Phi_0'\}.
\end{equation}
We also claim that $\Phi_0'\notin \operatorname{Ran} \L_0$. Indeed, if we suppose that there exists $h\in H^2(\R)^2$ such that $\L_0h=\Phi_0^{\prime}$, then one uses the same arguments as above to deduce that $h$ is a bounded classical solution of $\L_0h=\Phi_0^{\prime}$. Next, we use the reflection $\widehat{\Phi}_0(\xi)=\Phi_0(-\xi), \quad \widehat{h}(\xi)=-h(-\xi)$ and we set $\widehat\L_0 h:=\D_{d_0} h''-c_0 h'+D \F_{k_0}(\widehat{\Phi}_0(\xi)) h$. Then, it follows that $\widehat\L_0 \widehat{h}=\widehat{\Phi}_0'$ and $\widehat\L_0 \widehat{\Phi}_0'=0$. Therefore, $\widehat\L_0^2 \widehat{h}=0$ and $\widehat\L_0 \widehat{h} \neq 0$. This contradicts the algebraic simplicity of the eigenvalue $0$ \cite[Page 348]{kan1996}.
Hence,
\begin{equation}\label{cy5}
	\Phi_0' \notin \operatorname{Ran} \L_0. 
\end{equation}
Furthermore, $\L_0$ is Fredholm of index $0$. Indeed, the first-order matrix coefficients at $\pm\infty$ have characteristic polynomials $(r^2+c_0r-1)(d_0r^2+c_0r+1-k_0)=0$ and $(r^2+c_0r+1-k_0)(d_0r^2+c_0r-1)=0$. Thanks to the fact that $d_0>0$ and $k_0>1$, we deduce that each of the quadratic terms has one positive and one negative root.
Hence these matrices at $\pm\infty$ have each a Morse index equal to $2$. The Fredholm theorem for asymptotically hyperbolic differential operators (see e.g., \cite[Theorem 3.2, Remark 3.3, and Theorem 3.3]{sandstede2002}) gives that the index of the operator $\L_0$ is equal to $2-2 = 0$.  Moreover, since $\operatorname{dim}\operatorname{ker} \L_0=1$, it follows that $\operatorname{codim} \operatorname{Ran} \L_0=1$. This combined with \eqref{cy4} and \eqref{cy5} gives
$$L^2(\mathbb{R})^2 = \operatorname{Ran}\L_0 \oplus \operatorname{span}\{\Phi_0'\}.$$
With standard arguments, we conclude that $\A$ is surjective. 	Now if $\A(h,s)=0$, then $\L_0h+s\Phi_0'=0$ and $h_1(0)=0$. If $s\neq 0$, then $\Phi_0' = -\L_0(h/s) \in \operatorname{Ran}\L_0$, contradicting \eqref{cy5}. So, $s=0$ and thus $\L_0h=0$. This implies that $h\in\operatorname{ker}\L_0$, which by \eqref{cy4} gives $h=\alpha\Phi_0'$, for some $\alpha\in\mathbb{R}$. $h_1(0)=0 \iff \alpha U'_0(0)=0$. But $U'_0(0)<0$ from \eqref{eq2a}. Therefore $\alpha=0$ and so $h=0$. Hence $\A$ is injective and therefore bijective. From the bounded inverse theorem, we obtain that $\A$ an isomorphism. The implicit function theorem gives the existence of neighbourhoods $\V\subset (1,\infty)\times (0,\infty)$  of $(k_0,d_0)$ and $\U\subset H^2(\R)^2\times \R$ of 
	$(0,c_0)$ and a unique $C^\infty$ map
$$
	(k,d)\longmapsto (w(k,d),c(k,d))\in H^2(\R)^2\times\R
$$
such that $w(k_0, d_0)=0$, $c(k_0, d_0)=c_0$ and $\H(w(k, d), c(k, d), k, d)=(0,0)$ for every $(k,d)\in \V$. Therefore, by setting

\begin{equation}\label{ken000}
	\widetilde{\Phi}_{k,d}:=\Phi_0+w(k,d)=(\widetilde{U}_{k, d}, \widetilde{V}_{k, d})^T,
\end{equation} we can use bootstrap arguments to deduce that  $\widetilde{\Phi}_{k,d}$ is a classical solution of \eqref{comp}. In addition, $w_1(0)=0$ implies that $\widetilde{U}_{k, d}(0)=U_0(0)=\frac{1}{2}$. Now,  it remains to prove that  $\widetilde{\Phi}_{k,d}$  is positive. Note that this does not follow from the construction. Without loss of generality, we may assume that  $\V:=\left\{(k, d) \in(1, \infty) \times(0, \infty):\left|k-k_0\right|+\left|d-d_0\right|<r\right\}$, for some $r>0$.  Let $\varepsilon_0>0$ and $\delta_0>0$ be the constants given by the Proposition \ref{propposi}. Thanks to the Sobolev embedding $H^2(\R)^2\hookrightarrow C_b(\R)^2$, we can deduce the existence of a constant $C>0$ so that 
\begin{equation}\label{sobineq}
	\|w\|_{L^{\infty}}\leq C\|w\|_{H^2},\qquad w\in H^2(\R)^2 .
\end{equation}
 Since the map $(k,d)\mapsto w(k,d)$ is continuous from $\V$ into $H^2(\R)^2$ with $w(k_0,d_0)=0$, we obtain that there exists $\delta_1\in (0,r]$ such that
	$|k-k_0|+|d-d_0|<\delta_1$ implies $\|w(k,d)\|_{H^2}\leq\frac{\varepsilon_0}{C}$.
Since $\widetilde{\Phi}_{k,d}-\Phi_0=w(k,d)$, we deduce from \eqref{sobineq} that there exists $\delta_1\in (0,r]$ so that 	$|k-k_0|+|d-d_0|<\delta_1$ implies 
\begin{equation}\label{ken0}
	\|\widetilde{\Phi}_{k,d}-\Phi_0\|_{L^\infty}\leq\varepsilon_0.
\end{equation}
We set $\V_0:=\left\{(k,d)\in(1,\infty)\times(0,\infty)\ :\ |k-k_0|+|d-d_0|<\min\{\delta_0,\delta_1\}\right\}$, 
so that $\V_0\subset\V$ is an open neighbourhood of $(k_0,d_0)$. Let $(k,d)\in\V_0$. Then $|k-k_0|\leq\delta_0$, and \eqref{ken0} holds. Proposition \ref{propposi} then applies and gives
\begin{equation*}
	\widetilde{U}_{k,d}>0\quad\text{and}\quad \widetilde{V}_{k,d}>0\ \text{ on }\R,\qquad (k,d)\in\V_0 .
\end{equation*}
Hence, $\widetilde{\Phi}_{k,d}$ is a positive solution of \eqref{eq1}-\eqref{eq2} with speed $c(k,d)$. Theorem \ref{theo1} implies that
\begin{equation}\label{cident}
	c(k,d)=c_{k,d},\qquad (k,d)\in\V_0 ,
\end{equation}
and that $\widetilde{\Phi}_{k,d}=\Phi_{k, d}(\cdot+\tau)$ for some $\tau\in \R$. 
Thanks to the fact that $\widetilde{U}_{k, d}(0)=\frac{1}{2}$, we obtain $U_{k, d}(\tau)=\frac{1}{2}$. Since $U_{k, d}$ is strictly decreasing by \eqref{eq2a}, the equation $U_{k,d}(\xi)=\frac{1}{2}$ has at most one solution, and $\xi=0$ is one by the normalisation of $\Phi_{k,d}$. Hence $\tau=0$ and 
\begin{equation}\label{profilechoice}
	\Phi_{k,d}=\widetilde{\Phi}_{k,d},\qquad (k,d)\in\V_0 .
\end{equation}

The map $(k,d)\mapsto c(k,d)$ being of class $C^\infty$ on $\V$, \eqref{cident} implies that $(k,d)\mapsto c_{k,d}$ is of class $C^\infty$ on $\V_0$. Moreover, by \eqref{profilechoice} we have $\Phi_{k,d}=\Phi_0+w(k,d)$ for every $(k,d)\in \V_0$ and the map $(k, d) \mapsto w(k, d) \in H^2(\mathbb{R})^2$ is of class $C^{\infty}$ on $\mathcal{V} \supset \mathcal{V}_0$ from above. Hence, $	\Phi_{k,d}-\Phi_{k_0,d_0}\in H^2(\R)^2$ is of class $C^\infty$ on $\V_0$.
Since $(k_0,d_0)$ was arbitrary in $(1,\infty)\times(0,\infty)$, we conclude that $(k,d)\mapsto c_{k,d}$ is of class $C^\infty$ on $(1,\infty)\times(0,\infty)$. 
This completes the proof.
\end{proof}

The next result states a classical well-known result.
\begin{lemma}\label{lem1}
Let $I\subset\R^n$ be a nonempty and connected set. Let $f: I\to \R$ be a continuous function so that $f(x)\neq 0$ for every $x\in I$. Then either $f>0$ on $I$ or $f<0$ on $I$.
\end{lemma}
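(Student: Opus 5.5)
The statement is the intermediate value principle for real-valued functions on connected sets, so I would argue by contradiction using the connectedness of $I$ together with the continuity of $f$. The plan is to split $I$ into the two sets where $f$ is positive and where $f$ is negative, and to show that a genuine split of this kind would disconnect $I$.

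Concretely, I set $A:=\{x\in I: f(x)>0\}=f^{-1}((0,\infty))$ and $B:=\{x\in I: f(x)<0\}=f^{-1}((-\infty,0))$. Since $f$ is continuous on $I$, equipped with the topology induced from $\R^n$, both $A$ and $B$ are relatively open in $I$, as preimages of open subsets of $\R$. They are clearly disjoint. Because $f(x)\neq 0$ for every $x\in I$, their union is all of $I$.

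Now suppose, for contradiction, that neither conclusion holds. Then $A\neq\emptyset$ and $B\neq\emptyset$, so $\{A,B\}$ is a separation of $I$ into two nonempty, disjoint, relatively open sets. This contradicts the connectedness of $I$. Hence one of $A$, $B$ is empty, which means that either $f>0$ on all of $I$ or $f<0$ on all of $I$.

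There is no real obstacle here. The only point to be careful about is that openness has to be taken relative to $I$ rather than in $\R^n$, because $I$ need not be open. An equivalent route is to note that $f(I)$ is a connected subset of $\R$, hence an interval, and that this interval does not contain $0$. In the application I expect, $I$ will be a region such as $(1,\infty)\times(1,\infty)$ or $(1,\infty)\times(0,1)$, and $f(k,d)=c_{k,d}$, which is continuous by Theorem \ref{theosmooth}.
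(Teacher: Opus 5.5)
Your separation argument is correct, and so is the equivalent observation that $f(I)$ is a connected subset of $\R$ that avoids $0$. The paper gives no proof and simply cites the lemma as classical, so your argument is the standard one it relies on. On your anticipated application: the paper uses the lemma only on $I=(1,\infty)\times(1,\infty)$, and it obtains the case $0<d<1$ from the symmetry $c_{k,d}=-\sqrt{d}\,c_{k,1/d}$ of Proposition \ref{proprecipro}, not from a second application on $(1,\infty)\times(0,1)$.
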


\section{Necessary condition for the existence of a standing front} \label{main1}
In this section, we prove that for any monotone travelling front of \eqref{model} with zero wave speed $c=0$, one has necessarily $d=1$. 
 The main result of this section is the following.
\begin{theorem}\label{zerospeed}
Let $k>1$ and $d>0$. Let $(U,V)$ be a solution of \eqref{eq1}-\eqref{eq2a} with speed  $c_{k,d}$.  If $c_{k,d}=0$, then
\begin{equation} \label{imp1}
	(1-d)\int_{\R}U''(\xi)V'(\xi)d \xi =0
\end{equation}
and 
\begin{equation}\label{imp2}
	\int_{\R}U''(\xi)V'(\xi)d \xi=\frac{1}{2d} \int_{\mathbb{R}} (1-U(\xi)-V(\xi))^2(-U'(\xi)) d \xi \geq \frac{(k-1)^2}{24dk^2}>0
\end{equation}
hold. Consequently, $d=1$. 
\end{theorem}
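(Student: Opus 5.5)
The plan is to work directly with the standing-wave system $U''+U(1-U-kV)=0$, $dV''+V(1-V-kU)=0$ and to produce two independent integral identities by multiplier arguments. Throughout, Lemma \ref{decay} (exponential decay of $\Phi-E_\pm$ and of all derivatives) justifies every integration by parts and removes all boundary terms.

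\textbf{Step 1: the identity \eqref{imp1}.} I would multiply the $U$-equation by $V'$ and the $V$-equation by $U'$, then add or subtract and integrate over $\R$. Integration by parts gives $\int_\R V''U'=-\int_\R U''V'$, so the diffusion parts combine into $(1\mp d)\int_\R U''V'$. The reaction parts must then be shown to cancel. With $s=U+V$ and $p=UV$, a direct computation gives
\begin{equation*}
F V'+G U' = p'-(ps)'+(2-k)\,p\,s' ,
\end{equation*}
so only $(2-k)\int_\R p\,s'$ survives, and this does not vanish in general. I therefore expect the correct multipliers to be different ones, for instance $V'/V$ and $U'/U$, or a combination with the energy multipliers $U'$ and $dV'$. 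The goal is a choice for which the reaction contributions become exact derivatives vanishing at both ends. If no single choice works, I would use the species-exchange symmetry: $(\widehat U,\widehat V)(\xi):=(V,U)(-\xi)$ solves the same system with $d$ and $1$ interchanged, and comparing the two identities should cancel the non-exact reaction term, leaving $(1-d)\int_\R U''V'=0$. Finding the right multiplier is the step I expect to be the main obstacle.

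\textbf{Step 2: the identity in \eqref{imp2}.} I would rewrite $U''V'$ using the equations, replacing $U''$ by $-U(1-U-kV)$ and $V''$ by $-d^{-1}V(1-V-kU)$. Then I would integrate by parts, trading $V'$ for $U'$ through the sum $s=U+V$. The aim is to express the result as a perfect square $(1-s)^2$ times $-U'$, with the factor $\frac{1}{2d}$ coming from the $V$-equation. Since $-U'>0$ by \eqref{eq2a}, the integral is nonnegative at once.

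\textbf{Step 3: the lower bound $\frac{(k-1)^2}{24dk^2}$.} Here I would change variables $u=U(\xi)$, which is legitimate since $U$ is strictly monotone, so the integral becomes $\frac{1}{2d}\int_0^1(1-u-V)^2\,du$. It then suffices to bound $|1-u-V|$ from below on a subinterval of $u$-values of length comparable to $1$. The idea is to show, using \eqref{bound} and a maximum-principle argument, that along the front $s=U+V$ stays at least a distance of order $\frac{k-1}{k}$ away from $1$ on a $u$-range of positive length. For example, at points where $s=1$ the reaction terms become $U(1-k)V$ and $V(1-k)U$, which forces convexity information on $U$ and $V$. Integrating the square of this gap gives the constant $\frac{(k-1)^2}{24k^2}$, up to bookkeeping.

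\textbf{Conclusion.} \eqref{imp2} shows that $\int_\R U''V'>0$, so \eqref{imp1} forces $1-d=0$, that is, $d=1$.
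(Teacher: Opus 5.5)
Your proposal has a genuine gap at Step 1, exactly where you stopped. Your computation $FV'+GU'=p'-(ps)'+(2-k)\,p\,s'$ is correct. However, your claim that $\int_\R p\,s'$ does not vanish is wrong for a standing front, and you need neither new multipliers nor the symmetry. Apply the energy multipliers to each equation separately.

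\begin{itemize}
\item Multiplying $U''+U(1-U-kV)=0$ by $U'$ and integrating gives $\int_\R U''U'=0$ and $\int_\R U(1-U)U'=\int_1^0 u(1-u)\,du=-\frac16$. Hence $\int_\R UVU'=-\frac{1}{6k}$.
\item Likewise, multiplying $dV''+V(1-V-kU)=0$ by $V'$ gives $\int_\R UVV'=\frac{1}{6k}$.
\end{itemize}

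Therefore $\int_\R p\,s'=\int_\R UV(U'+V')=0$. The reaction contribution drops out, and adding the $V'$- and $U'$-multiplied equations yields $(1-d)\int_\R U''V'=0$. Here $c=0$ is essential, since a speed term $c\int_\R U'^2$ would change these values.

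Your fallback would not have worked either. Running the same computation on $(\widehat U,\widehat V)(\xi)=(V,U)(-\xi)$, with diffusions $(d,1)$, returns exactly $-1$ times the relation $(1-d)\int_\R U''V'+(2-k)\int_\R p\,s'=0$. Comparing the two relations therefore cancels nothing.

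The same two numbers, $\int_\R UVU'=-\frac1{6k}$ and $\int_\R UVV'=\frac1{6k}$, are the missing input in Steps 2 and 3; as written, those steps only state aims.

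For Step 2, use $\int_\R U''V'=-\int_\R U'V''=\frac1d\int_\R GU'$. The perfect square comes from the algebraic identity $2G+W^2=(1-U)^2-V^2-2(k-1)UV$, with $W=1-U-V$. Integrating this against $U'$ gives $-\frac13+\frac1{3k}+\frac{k-1}{3k}=0$, where you use $\int_\R V^2U'=-2\int_\R UVV'=-\frac1{3k}$. Hence $\int_\R GU'=\frac12\int_\R W^2(-U')$.

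For Step 3, the maximum-principle observation you describe (reaction signs at points where $U+V=1$) only yields $U+V<1$. It gives no quantitative gap on a $u$-interval of positive length, because you have no control of $V$ as a function of $u=U$. The constant $\frac{(k-1)^2}{24k^2}$ will not emerge from such an argument. Instead, observe that
\[
\int_\R UW(-U')=\tfrac16-\tfrac1{6k}=\tfrac{k-1}{6k}, \qquad \int_\R U^2(-U')=\tfrac13 .
\]
Cauchy--Schwarz applied to $U\sqrt{-U'}$ and $W\sqrt{-U'}$ then gives $\int_\R W^2(-U')\ge 3\left(\frac{k-1}{6k}\right)^2=\frac{(k-1)^2}{12k^2}$. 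Multiplying by $\frac{1}{2d}$ gives exactly the bound in \eqref{imp2}.
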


\begin{proof} The proof relies on scalar energy identities. We mention that the identities \eqref{e01} and \eqref{e02} are standard and closely related to the one derived in \cite{guo2013}. The main new point here is their combination with \eqref{e04} to obtain \eqref{imp1} and the strict positivity estimate \eqref{imp2}. 
	 If we take $c=0$ in \eqref{eq1}, we obtain 
\begin{equation}\label{eq1new}
	\begin{array}{llllll}
		U''+U(1-U-kV)&=&0,\\
		dV''+V(1-V-kU)&=&0.
	\end{array} 
\end{equation}
Multiplying the first equation in \eqref{eq1new} by $U'$ and integrating on $\R$, we obtain 
\begin{equation}\label{e00}
	\int_{\R}U''U'd\xi+\int_{\R}UU'(1-U)d\xi -k\int_{\R}UVU'd\xi=0.
\end{equation}
Using the boundary limits in Lemma \ref{decay}, an integration by parts shows that the first term of \eqref{e00} vanish and the second term 
\begin{equation}\label{e000}
 \int_{\R}UU'(1-U)d\xi=-\frac{1}{6}.
\end{equation}
Therefore,
\begin{equation}\label{e01}
\int_{\R}UVU'd\xi=-\frac{1}{6k}.
\end{equation}
Multiplying the second equation in \eqref{eq1new} by $V'$ and employing similar arguments as above, we can deduce that
\begin{equation}\label{e02}
	\int_{\R}UVV'd\xi=\frac{1}{6k}.
\end{equation}

Combining \eqref{e01} and \eqref{e02} gives
\begin{equation}\label{e03}
\int_{\R}UVU'd\xi+	\int_{\R}UVV'd\xi=0.
\end{equation}
Next, we claim that  
\begin{equation}\label{e04}
 \int_{\R}[F(U,V)V'+G(U,V)U']d\xi =0.
\end{equation}
Indeed, let $R>0$. Using an integration by part, we obtain
\begin{equation*}
	\begin{array}{lllll}
\dis 	\int_{-R}^R[F(U,V)V'+G(U,V)U']d\xi &=&  [UV]^R_{-R}-[U^2V]_{-R}^R-[V^2U]_{-R}^R\\
&&+\dis (2-k)\int_{-R}^{R}(UVU'+UVV')d\xi.
	\end{array}
\end{equation*}
Letting $R\to +\infty$ in the previous identity while using Lemma \ref{decay} and \eqref{e03} proves the claim.

Now, we multiply the first and the second equation in \eqref{eq1new} by $V'$ and $U'$, respectively, combine the two equations and we integrate by parts over $[-R,R]$ to obtain
\begin{equation}\label{e05}
\int_{-R}^RU''V'd\xi +d\int_{-R}^RV''U'd\xi + \int_{-R}^R[F(U,V)V'+G(U,V)U']d\xi =0.
\end{equation}
An integration by part of the second term in \eqref{e05} gives
\begin{equation}\label{e06}
d\int_{-R}^RV''U'd\xi =d[V'U']_{-R}^R-d\int_{-R}^{R}V'U''d\xi.
\end{equation}
Combining \eqref{e05} and \eqref{e06} and taking the limit as $R\to +\infty$, while using \eqref{e04} gives
$$	(1-d)\int_{\R}U''V'd \xi =0.$$ Hence \eqref{imp1} holds. 
From \eqref{e06}, letting $R\to \infty$ and combining the result with the second equation of \eqref{eq1new} implies that
\begin{equation}\label{siha0}
\dis \int_{\R}U''V'd \xi =-\int_{\R}U'V''d \xi=\frac{1}{d}\int_{\R}G(U,V)U'd\xi.
\end{equation}
Let us set $W=1-U-V$. Next, we write $2G(U,V)+W^2=(1-U)^2-V^2-2(k-1)UV$. Then multiplying this identity by $U'$ and integrating by part over $\R$ leads us to 
 \begin{equation}\label{e07}
2\int_{\R}G(U,V)U'd\xi +\int_{\R}W^2U'=\int_{\R}(1-U)^2U'd\xi -\int_{\R}V^2U'd\xi-2(k-1)\int_{\R}UVU'd\xi.
 \end{equation}
A quick integration by parts while using the boundary conditions shows that the first term in \eqref{e07} is equal to $-\frac{1}{3}$, the second term, thanks to \eqref{e02} is equal to $-\left(-\frac{1}{3k}\right)$. Using \eqref{e01}, we deduce from \eqref{e07} that 
\begin{equation}\label{e08}
	\begin{array}{lllll}
	\dis 2\int_{\R}G(U,V)U'd\xi +\int_{\R}W^2U'd\xi=\dis -\frac{1}{3}-\left(-\frac{1}{3k}\right)-2(k-1)\left(-\frac{1}{6k}\right)=0.
	\end{array}
\end{equation}
Consequently, $\dis \int_{\R}G(U,V)U'd\xi=\frac{1}{2} \int_{\R}W^2(-U')d\xi$, which combined with \eqref{siha0} gives the first equality in \eqref{imp2}.  Next, combining \eqref{e000}-\eqref{e01} gives 
\begin{equation}\label{help0}
	\int_{\R}UW(-U')d\xi =\frac{k-1}{6k}.
\end{equation}
The Cauchy-Schwarz inequality applied to $U \sqrt{-U'}$ and $W \sqrt{-U'}$ lead us to
\begin{equation*}
\begin{array}{llll}
	\dis 	\left(\frac{k-1}{6 k}\right)^2 & =& \dis \left(\int_{\R} U W(-U') d \xi\right)^2 \\
	& \leq& \dis \left(\int_{\R} U^2(-U') d \xi\right)\left(\int_{\R} W^2(-U') d \xi\right) \\
	& =&\dis \frac{1}{3} \int_{\R} W^2(-U') d \xi.
\end{array}
\end{equation*}

Hence the inequality in \eqref{imp2} holds. Finally, we deduce from \eqref{imp1}, that $d=1$. This completes the proof.
\end{proof}
 We also prove that $U+V<1$ and obtain a quantitative lower bound on the maximum of $1-U-V$. 
\begin{corollary}\label{corollary1}
	Let $k>1$ and $d>0$. If a monotone solution of  \eqref{eq1}-\eqref{eq2a}
	has $c=0$, then 
	\begin{equation}\label{eq5}
		U(\xi)+V(\xi)<1\; \text{ for every   } \xi\in \R,
	\end{equation}
		and 
		\begin{equation}\label{eqnew}
			\max _{\xi \in \mathbb{R}}(1-U(\xi)-V(\xi)) \geq \frac{k-1}{3k} .
		\end{equation}
\end{corollary}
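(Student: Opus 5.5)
The plan is to use Theorem \ref{zerospeed} to reduce to the case $d=1$. With $d=1$ the equation for $W:=1-U-V$ becomes a scalar linear inequality, to which the maximum principle applies. The quantitative bound will then follow from identity \eqref{help0}.

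\textbf{Step 1 (reduction to $d=1$ and the equation for $W$).} Since $c=0$, Theorem \ref{zerospeed} gives $d=1$. Adding the two equations in \eqref{eq1new} with $d=1$ and writing $S:=U+V$, $W:=1-S$, we get
\begin{equation*}
W''=U(1-U-kV)+V(1-V-kU)=U\bigl(W-(k-1)V\bigr)+V\bigl(W-(k-1)U\bigr).
\end{equation*}
Equivalently,
\begin{equation*}
W''-S\,W=-2(k-1)UV .
\end{equation*}
By \eqref{bound} we have $U,V>0$, so $S>0$ and the right-hand side is strictly negative on $\R$, because $k>1$. By \eqref{eq2} we also have $W(\pm\infty)=0$.

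\textbf{Step 2 (strict positivity of $W$).} We argue by contradiction and suppose $\min_{\R}W\le 0$ is attained somewhere. If $\inf W<0$, the limits at $\pm\infty$ force the infimum to be attained at some $\xi_0\in\R$. At such a point $W''(\xi_0)\ge 0$ and $-S(\xi_0)W(\xi_0)\ge 0$, so $W''-SW\ge0$ at $\xi_0$. This contradicts the strict negativity obtained in Step 1. Hence $W\ge 0$.

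Now suppose $W(\xi_0)=0$ for some $\xi_0$. Then $\xi_0$ is a minimum point, so $W''(\xi_0)\ge0$. On the other hand, the equation gives $W''(\xi_0)=-2(k-1)U(\xi_0)V(\xi_0)<0$, which is again a contradiction. Therefore $W>0$ on $\R$, which is \eqref{eq5}.

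This is the only point where $d=1$ is really used: for $d\neq1$ the two equations do not combine into a closed equation for $W$. That is why Theorem \ref{zerospeed} has to be invoked first. I consider this combination the main step of the argument.

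\textbf{Step 3 (lower bound on $\max W$).} From \eqref{help0} we know that $\int_\R UW(-U')\,d\xi=\frac{k-1}{6k}$. By \eqref{eq2a} we have $-U'>0$, and by Step 2 we have $W>0$. Hence
\begin{equation*}
\frac{k-1}{6k}\le \Bigl(\max_{\R}W\Bigr)\int_\R U(-U')\,d\xi=\frac12\max_{\R}W,
\end{equation*}
since $\int_\R U(-U')\,d\xi=\bigl[-U^2/2\bigr]_{-\infty}^{+\infty}=\frac12$. The maximum exists because $W>0$ is continuous and $W(\pm\infty)=0$. This gives \eqref{eqnew}.
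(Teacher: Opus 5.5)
Your proof is correct, and Step 3 is the same as the paper's. The difference is in Step 2. You first invoke Theorem \ref{zerospeed} to set $d=1$, then apply a minimum principle to the closed linear equation $W''-SW=-2(k-1)UV$.

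The paper keeps $d>0$ arbitrary and works with $U+V$. It uses $(U+V)''=-F(U,V)-\frac{1}{d}G(U,V)$. On $\{U+V\ge 1\}$, both $1-U-kV$ and $1-V-kU$ are negative, because $k>1$ and $U,V>0$. Hence $(U+V)''>0$ there, and $U+V$ cannot attain a maximum at a level $\ge 1$.

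This shows your closing remark is mistaken. No closed equation for $W$ is needed, only the sign of $W''$ where $W\le 0$. In your notation, $W''=F+\frac{1}{d}G=U\bigl(W-(k-1)V\bigr)+\frac{1}{d}V\bigl(W-(k-1)U\bigr)<0$ whenever $W\le 0$, for every $d>0$.

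So the paper's route makes \eqref{eq5} independent of Theorem \ref{zerospeed}. The same holds for \eqref{eqnew}, since \eqref{help0} uses only $c=0$ and the $U$-equation. Your route is logically valid, but it relies on that theorem unnecessarily; all it gains is a slightly tidier linear form of the equation for $W$.
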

\begin{proof}
	Let $W(\xi)=U(\xi)+V(\xi)$. Then $W(\xi) \to 1$ as $\xi \to \pm\infty$. Now if $c=0$, then combining the equations in  \eqref{eq1} gives  
	$$W''(\xi)=-F(U,V)-\frac{1}{d}G(U,V).$$
	But from Theorem \ref{theo1}, $U,V>0$. Thus for all $\xi\in \R$ such that $W(\xi)\geq 1$ (and since $k>1$),  we have  $1-U-kV=1-W+(1-k)V<0$ and $1-V-kU=1-W+(1-k)U<0$. This implies that 
	\begin{equation}\label{eq00}
		W''(\xi)>0 \quad \text{for all } \xi\in \R\text{ such that }  W(\xi)\geq 1.
	\end{equation}
	Now, assume that $W(\xi_0)>1$ for some $\xi_0 \in \R$.  Since $W(\xi)\to 1$ as $|\xi|\to \infty$, then $W$ attains a global maximum at some $\xi_m \in \R$ and $W(\xi_m)>1$. So $W''(\xi_m)\leq 0$. This contradicts \eqref{eq00}. Consequently, $W(\xi)\leq 1$ for all $\xi \in \R$. If $W(\xi_1)=1$ for some $\xi_1\in \R$. Then $\xi_1$ is a local maximum and again $W''(\xi_1)\leq0$, contradicting \eqref{eq00}. Therefore \eqref{eq5} holds.  Finally if we set $\dis M=\max _{\xi \in \mathbb{R}}(1-U(\xi)-V(\xi))>0$, then from \eqref{help0}, we can deduce that 
	$$\frac{k-1}{6k}\leq M\int_{\R}U(-U')d\xi =\frac{M}{2}.$$
Hence \eqref{eqnew} holds. This completes the proof.
\end{proof}


\section{Proof of Theorem \ref{maintheo}} \label{mainresult}
In this section, we prove our main theorem. We first provide some results needed in the sequel.
The following result is obtained from \cite{rodrigo2001}, where the authors explicitly computed nine families of travelling waves solutions of \eqref{model} (see also \cite{rodrigo2000}). We use their family of travelling front solution No. 6 on their page 668. We provide details here to avoid any convention mismatch. In the symmetric setting, their result reads as follows.
\begin{proposition}\label{proprodrigo}
Let $k=\frac{11}{6}$, $d=\frac{11}{2}$, $c=-\frac{\sqrt{6}}{12}$ and $\lambda=\frac{1}{\sqrt{6}}$. We define
$$
\phi(\xi)=\frac{1}{1+\mathrm{e}^{\lambda \xi}}, \quad U(\xi)=\phi(\xi)^2, \quad V(\xi)=1-\phi(\xi).
$$
Then  $(U,V)$ solves \eqref{eq1}-\eqref{eq2a} with speed $c$. In particular
$
c_{11/6, 11/2}=-\frac{\sqrt{6}}{12}<0.
$
\end{proposition}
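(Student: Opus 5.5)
The plan is a direct verification: substitute the explicit profile into \eqref{eq1}, reduce each equation to a polynomial identity in $\phi$, check the limits \eqref{eq2} and the monotonicity \eqref{eq2a}, and then invoke the uniqueness part of Theorem \ref{theo1} to identify $c$ with $c_{k,d}$.

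The whole computation rests on the logistic identity
\begin{equation*}
\phi'=-\lambda\phi(1-\phi), \qquad 0<\phi<1 .
\end{equation*}
From it I compute the derivatives of $U=\phi^2$:
\begin{equation*}
U'=2\phi\phi'=-2\lambda\phi^2(1-\phi),\qquad U''=-2\lambda\phi'\phi(2-3\phi)=2\lambda^2\phi^2(1-\phi)(2-3\phi).
\end{equation*}
For $V=1-\phi$ I get
\begin{equation*}
V'=\lambda\phi(1-\phi),\qquad V''=\lambda\phi'(1-2\phi)=-\lambda^2\phi(1-\phi)(1-2\phi).
\end{equation*}

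For the first equation, note that $1-U-kV=(1-\phi)(1+\phi-k)$. Factoring out $\phi^2(1-\phi)$ from $U''+cU'+U(1-U-kV)$ leaves the affine expression
\begin{equation*}
2\lambda^2(2-3\phi)-2c\lambda+1+\phi-k .
\end{equation*}
The coefficient of $\phi$ is $1-6\lambda^2$, which vanishes because $\lambda^2=\frac16$. The constant term is $\frac23-2c\lambda+1-k$. Since $c\lambda=-\frac1{12}$ and $k=\frac{11}6$, it equals $\frac23+\frac16+1-\frac{11}6=0$.

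For the second equation, note that $1-V-kU=\phi-k\phi^2$. Factoring out $\phi(1-\phi)$ from $dV''+cV'+V(1-V-kU)$ leaves
\begin{equation*}
-d\lambda^2(1-2\phi)+c\lambda+1-k\phi .
\end{equation*}
The coefficient of $\phi$ is $2d\lambda^2-k=\frac{11}{6}-\frac{11}{6}=0$. The constant term is $-d\lambda^2+c\lambda+1=-\frac{11}{12}-\frac1{12}+1=0$. Hence $(U,V)$ solves \eqref{eq1} with speed $c=-\frac{\sqrt6}{12}$.

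It remains to check the boundary and sign conditions. As $\xi\to-\infty$ we have $\phi\to1$, so $(U,V)\to(1,0)$. As $\xi\to+\infty$ we have $\phi\to0$, so $(U,V)\to(0,1)$; this gives \eqref{eq2}. Since $0<\phi<1$, the formulas above give $U'<0$ and $V'>0$ on $\R$, which is \eqref{eq2a}. They also give $U,V>0$.

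The positive solution $(U,V)$ satisfies \eqref{limits}, so the uniqueness statement in Theorem \ref{theo1} forces $c=c_{11/6,11/2}$. This yields $c_{11/6,11/2}=-\frac{\sqrt6}{12}<0$. There is no real obstacle; the only care needed is in the sign conventions for $c$ and the orientation of $\phi$, which is exactly why the computation is redone explicitly rather than quoted from \cite{rodrigo2001}.
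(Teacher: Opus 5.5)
Your proof is correct and follows the paper's route: substitute the logistic profile, use $\phi'=-\lambda\phi(1-\phi)$ to reduce each equation to a polynomial identity in $\phi$, check \eqref{eq2}--\eqref{eq2a}, and conclude $c=c_{11/6,11/2}$ from the uniqueness part of Theorem \ref{theo1}. The only differences are cosmetic. The paper solves the coefficient equations to derive $k=\frac{11}{6}$, $d=\frac{11}{2}$, $c=-\frac{\sqrt6}{12}$, dividing only by $\phi^2$ and keeping a quadratic in $\phi$ for the first equation. You instead plug in the given values and also factor out $(1-\phi)$, so only an affine expression remains.
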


\begin{proof}
 We have $\phi'=-\lambda\phi(1-\phi)$ and $\phi''=\lambda^2\phi(1-\phi)(1-2\phi)$. With straightforward computations, we obtain $U'=-2 \lambda \phi^2(1-\phi)$ and $U''=2 \lambda^2 \phi^2(1-\phi)(2-3 \phi)$. Also $F(U,V)=\phi^2\left((1-k)+k \phi-\phi^2\right)$. Therefore, the first equation in \eqref{eq1} becomes after a simplification by $\phi^2>0$
 \begin{equation*}
 	\begin{array}{llllll}
 		 0&=&2 \lambda^2(1-\phi)(2-3 \phi)-2 c \lambda(1-\phi)+(1-k)+k \phi-\phi^2\\
 		&=& (6 \lambda^2-1)\phi^2+(-10 \lambda^2+2 c \lambda+k)\phi+(4 \lambda^2-2 c \lambda+1-k).
 	\end{array}
 \end{equation*} 
This implies that 
\begin{equation}\label{si0}
\lambda^2=\frac{1}{6}, \quad \lambda c=\frac{5}{6}-\frac{k}{2}.
\end{equation}
Moreover, $V'=\lambda\phi(1-\phi)$, $V''=-\lambda^2\phi(1-\phi)(1-2\phi)$ and $G(U,V)=\phi(1-\phi)(1-k\phi)$.  Thus,  the second equation in \eqref{eq1} becomes after a simplification by $\phi(1-\phi)>0$
\begin{equation*}
	\begin{array}{llllll}
		0&=&-d\lambda^2(1-2\phi)+c\lambda +1-k\phi\\
		&=& (2d\lambda^2-k)\phi+(-d\lambda^2+c\lambda+1).
	\end{array}
\end{equation*} 
So, 
\begin{equation}\label{si1}
d=3k, \quad \lambda c=\frac{k}{2}-1.
\end{equation}
Combining \eqref{si0} and \eqref{si1} gives $k=\frac{11}{6}$, $d= \frac{11}{2}$ and $c=-\frac{\sqrt{6}}{12}$. Moreover,  $\phi(-\infty)=1$, $\phi(+\infty)=0$ and $\phi'<0$. Therefore, $U'=2\phi'\phi<0$,  $V'=-\phi'>0$ and the boundary conditions in \eqref{eq2} are satisfied. Hence $(U,V)$ is solution to \eqref{eq1}-\eqref{eq2a} and By Theorem \ref{theo1}, the speed of the monotone front is unique. This implies that $c_{11 / 6,11 / 2}=c$.
\end{proof}

We prove the following result. A similar result is obtained in \cite[Section 6]{ma2019}.

\begin{proposition}\label{proprecipro}
For every $k>1$ and $d>0$ we have
\begin{equation}\label{naam}
		c_{k,d}=-\sqrt{d}\,c_{k,1 / d}.
\end{equation}
\end{proposition}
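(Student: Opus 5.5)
We argue by exchanging the roles of the two species, reflecting space, and rescaling it, and then appealing to the uniqueness part of Theorem \ref{theo1}. Fix $k>1$ and $d>0$, and let $(U,V)$ be the monotone front of \eqref{eq1}-\eqref{eq2a} with speed $c=c_{k,d}$. Define
\begin{equation*}
\widetilde U(\xi):=V\!\left(-\sqrt{d}\,\xi\right),\qquad \widetilde V(\xi):=U\!\left(-\sqrt{d}\,\xi\right),\qquad \xi\in\R .
\end{equation*}
The reflection $\xi\mapsto-\xi$ makes the new first component go from $1$ at $-\infty$ to $0$ at $+\infty$, as \eqref{eq2} requires after the exchange. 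The factor $\sqrt{d}$ turns the diffusion coefficient of the new first component into $1$. The boundary conditions \eqref{limits} then follow directly: $(\widetilde U,\widetilde V)(-\infty)=(V,U)(+\infty)=(1,0)$ and $(\widetilde U,\widetilde V)(+\infty)=(V,U)(-\infty)=(0,1)$. Positivity of $\widetilde U,\widetilde V$ follows from \eqref{bound}.

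Next we compute the equations satisfied by the new pair. From $\widetilde U'(\xi)=-\sqrt{d}\,V'(-\sqrt d\xi)$ and $\widetilde U''(\xi)=d\,V''(-\sqrt d\xi)$, the second equation of \eqref{eq1}, evaluated at $\eta=-\sqrt d\xi$, becomes
\begin{equation*}
\widetilde U''-\frac{c}{\sqrt d}\,\widetilde U'+\widetilde U(1-\widetilde U-k\widetilde V)=0 .
\end{equation*}
The same substitution in the first equation gives $U''(\eta)=\frac1d\widetilde V''(\xi)$ and $U'(\eta)=-\frac{1}{\sqrt d}\widetilde V'(\xi)$. Hence
\begin{equation*}
\frac1d\,\widetilde V''-\frac{c}{\sqrt d}\,\widetilde V'+\widetilde V(1-\widetilde V-k\widetilde U)=0 .
\end{equation*}
Together these say that $(\widetilde U,\widetilde V)$ is a positive $C^2$ solution of \eqref{eq1}, with parameters $(k,1/d)$ and speed $\widetilde c=-c/\sqrt d$, satisfying \eqref{limits}. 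It is also monotone, since $\widetilde U'=-\sqrt d\,V'(\cdot)<0$ and $\widetilde V'=-\sqrt d\,U'(\cdot)>0$, although we do not need this.

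By the uniqueness statement in Theorem \ref{theo1}, applied with parameters $(k,1/d)$, we obtain $-c_{k,d}/\sqrt d=c_{k,1/d}$, which is exactly \eqref{naam}. The only step that needs care is the bookkeeping of the chain-rule factors and signs. We must check that the rescaled diffusion coefficient of the second component is $1/d$ rather than $d$, and that the reflection flips the sign of the speed. Both checks are direct computations.
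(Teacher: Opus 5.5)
Your proof is correct and follows essentially the same route as the paper. You use the same change of variables $\eta=-\sqrt{d}\,\xi$ with the species exchanged, the same chain-rule bookkeeping giving diffusion ratio $1/d$ and speed $-c/\sqrt{d}$, and the uniqueness part of Theorem \ref{theo1}. The only cosmetic difference is that you invoke the uniqueness clause for positive solutions with limits \eqref{limits}, whereas the paper checks monotonicity and appeals to uniqueness of the monotone front's speed; Theorem \ref{theo1} covers both.
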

\begin{proof}
Let us consider a solution $(U,V,c)$ of \eqref{eq1}-\eqref{eq2a} with $d>0$. We make the change of variables  $\eta=-\sqrt{d} \xi$ and we define 
$$
\widetilde{U}(\xi)=V(\eta), \quad \widetilde{V}(\xi)=U(\eta), \quad \widetilde{c}=-\frac{c}{\sqrt{d}} .
$$
As $\xi$ goes from $-\infty$ to $+\infty$, $\eta$ goes from $+\infty$ to $-\infty$. Hence
$$
(\tilde{U}, \tilde{V})(-\infty)=(1,0), \quad(\tilde{U}, \tilde{V})(+\infty)=(0,1) .
$$
Moreover, a differentiation gives
$$
\widetilde{U}'=-\sqrt{d} V^{\prime}(\eta)<0, \quad \widetilde{V}'=-\sqrt{d} U'(\eta)>0,
$$
and
$$
\tilde{U}''=d V''(\eta), \quad \tilde{V}''=d U''(\eta).
$$
The reaction terms give
$$
F(\widetilde{U}, \widetilde{V})=G(U, V)(\eta), \quad G(\widetilde{U}, \widetilde{V})=F(U, V)(\eta) .
$$
Therefore,
\begin{equation*}
	\begin{array}{lll}
	\widetilde{U}''+\widetilde{c} \widetilde{U}'+F(\widetilde{U}, \widetilde{V}) &=d V^{\prime \prime}(\eta)+c V^{\prime}(\eta)+G(U, V)(\eta)=0, \\
	\frac{1}{d} \widetilde{V}''+\widetilde{c} \widetilde{V}'+G(\widetilde{U}, \widetilde{V})  &=U^{\prime \prime}(\eta)+c U^{\prime}(\eta)+F(U, V)(\eta)=0 .
	\end{array}
\end{equation*}
Consequently,  $(\widetilde{U}, \widetilde{V})$ is a monotone front for diffusion ratio $\frac{1}{d}$ and speed $-\frac{c}{\sqrt{d}}$. Uniqueness of the speed given by  Theorem \ref{theo1} gives $c_{k, 1 / d}=-\frac{c_{k, d}}{\sqrt{d}}$.  Hence \eqref{naam} holds.
\end{proof}

We can now prove the Theorem \ref{maintheo}.
\begin{proof}[Proof of Theorem \ref{maintheo}]
We consider the set $I=(1, \infty) \times(1, \infty)$. Then $I$ is convex and therefore connected. Using Theorem \ref{theosmooth}, the map $(k, d) \mapsto c_{k, d}$ is continuous on $I$. By Theorem \ref{zerospeed}, it does not vanish on $I$. The Lemma \ref{lem1} shows that its sign is constant on $I$. The point $(11/6, 11/2)$ belongs to $I$, and thanks to Proposition \ref{proprodrigo}, we have $c_{11 / 6,11 / 2}=-\frac{\sqrt{6}}{12}<0$. Hence $c_{k, d}<0$ for every $k>1$ and $d>1$. 

Furthermore, if  $d=1$,  then Proposition \ref{proprecipro} gives $c_{k,1}=-c_{k,1}$. That is $2 c_{k,1}=0$, and so $c_{k,1}=0$.
Finally, for $0<d<1$, we have $1/d>1$ and the arguments above give $c_{k,1 / d}<0$. From \eqref{naam}, we deduce that $	c_{k,d}=-\sqrt{d}\,c_{k,1 / d}>0.$ This completes the proof.
\end{proof}

\begin{remark} The Theorem \ref{maintheo} proves that the faster diffuser advances. This implies that the front speed favours the species with the larger diffusion coefficient when all reaction parameters are identical and $k>1$. We mention that the theorem does not imply that the faster diffuser wins for every initial condition. Indeed, the reaction system is bistable and any spatially homogeneous initial data in the basin of either exclusion equilibrium would converge to that equilibrium independently of the diffusion coefficient. 	
We also mention that it does not mean that natural selection always favours high dispersal. Our symmetric model \eqref{model} treats dispersal as fixed and therefore excludes the evolution of dispersal, reproduction, and competitive ability, together with trade-offs among these traits at expanding fronts (see e.g., \cite{burton2010}). We refer to \cite{girardin2017, girardin2019, girardin2018, he2013} for discussions on the distinction between homogeneous and heterogeneous competition.
\end{remark}

%


\subsection*{Acknowledgements}
CK acknowledges support from the Natural Sciences and Engineering Research Council of Canada (NSERC), Discovery Grant RGPIN-2025-05864 and the Discovery Launch Supplement DGECR-2025-00184. 


\bibliographystyle{abbrv}
\bibliography{References}

\appendix 
\section{A positivity lemma for a linear equation on a half-line}
In this appendix we prove the elementary result used in the proof of  Proposition \ref{propposi}. It plays the role of a maximum principle on the unbounded interval $[a,+\infty)$, the compactness of the domain being replaced by the decay of $w$ at infinity. Since the classical statements are formulated on bounded domains, we include a short proof. 
\begin{lemma}\label{lempos}
	Let $D>0$, $c\in\R$, $a\in\R$ and let $q\in C([a,+\infty))$ with $q\leq0$ on $[a,+\infty)$. If
	$w\in C^2([a,+\infty))$ satisfy
	\begin{equation}\label{eqw}
		Dw''+cw'+qw=0 \text{ on }[a,+\infty),\qquad w(a)>0,\qquad w(+\infty)=0,
	\end{equation}
then $w>0$ on $[a,+\infty)$.
\end{lemma}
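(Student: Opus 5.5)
The plan is a weak maximum principle on the half-line, with the decay $w(+\infty)=0$ replacing compactness of the domain. Suppose, for contradiction, that $w$ is not positive on $[a,+\infty)$. Since $w(a)>0$ and $w$ is continuous, the set $\{\xi\geq a:\ w(\xi)\leq 0\}$ is nonempty and closed. It does not contain $a$. Let $\xi_0:=\inf\{\xi\geq a:\ w(\xi)\leq0\}$. Then $\xi_0>a$, $w(\xi_0)=0$ and $w>0$ on $[a,\xi_0)$.

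There are two cases, according to whether $w$ takes strictly negative values somewhere on $[\xi_0,+\infty)$.

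\emph{Case 1: $w(\xi_1)<0$ for some $\xi_1>\xi_0$.} Because $w(+\infty)=0$ and $w(\xi_0)=0$, the minimum of $w$ over $[\xi_0,+\infty)$ is attained at an interior point $\xi_m$ with $w(\xi_m)<0$. Indeed, pick $R$ large so that $|w|<|w(\xi_1)|/2$ on $[R,\infty)$, and minimize over the compact interval $[\xi_0,R]$. At $\xi_m$ we have $w'(\xi_m)=0$ and $w''(\xi_m)\geq 0$. However, a negative minimum alone does not contradict the equation, since $q\leq0$ gives $qw\geq0$ and hence only $Dw''=-qw\leq0$, which is compatible with $w''(\xi_m)\ge 0$ when $q(\xi_m)=0$.

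I would therefore argue more robustly on the interval where $w<0$. Let $(\alpha,\beta)$ be the connected component of $\{w<0\}$ containing $\xi_m$, where $\beta$ may equal $+\infty$. On this interval $Dw''+cw'=-qw\leq 0$, so $w$ is a supersolution of the operator $L:=D\partial^2+c\partial$, which has no zeroth-order term. We have $w(\alpha)=0$, and either $w(\beta)=0$ or $w\to0$ at $+\infty$. The maximum principle for supersolutions (the minimum is attained on the boundary) then applies: an interior negative minimum is impossible for $L w\le 0$ by Hopf's strong minimum principle, unless $w$ is constant. A constant $w$ would equal its boundary value $0$, contradicting $w(\xi_m)<0$. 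To avoid citing Hopf, I would give a direct argument. Set $p:=e^{c\xi/D}w'$, so that $(D p)'=e^{c\xi/D}(Dw''+cw')\le0$ and $p$ is nonincreasing on $(\alpha,\beta)$. Since $p(\xi_m)=0$, we get $w'\le 0$ on $(\xi_m,\beta)$. Then $w\le w(\xi_m)<0$ there, which contradicts $w(\beta)=0$, or $w\to0$ if $\beta=\infty$.

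\emph{Case 2: $w\geq0$ on $[\xi_0,\infty)$.} Then $\xi_0$ is an interior minimum of $w$ on $(a,\infty)$ with value $0$, so $w(\xi_0)=w'(\xi_0)=0$. Uniqueness for the linear ODE initial value problem ($D>0$, continuous coefficients) forces $w\equiv0$, contradicting $w(a)>0$.

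The main obstacle is the degenerate possibility $q=0$, where a naive second-derivative test fails. The integrating-factor monotonicity argument in Case 1 and ODE uniqueness in Case 2 handle it.
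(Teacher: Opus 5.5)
Your proof is correct and has the same structure as the paper's. The decay $w(+\infty)=0$ produces an interior negative minimum, which is then ruled out, and an interior zero (a minimum with $w'=0$) is excluded by uniqueness for the linear Cauchy problem. The only difference is the step that rules out the negative minimum: the paper invokes the strong maximum principle for $L(-w)=0$ with $q\le 0$ on $[a,R]$, while you argue directly on the negative component, where $Dw''+cw'=-qw\le 0$, using monotonicity of $e^{c\xi/D}w'$. This is a self-contained replacement for that citation and correctly handles points where $q$ vanishes.
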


\begin{proof}
Let $Lw:=D w''+cw'+qw$. Suppose first that $w(\xi_0)<0$ for some $\xi_0>a$. Since $w(\xi)\to 0$ as $\xi\to \infty$, there exists $R>\xi_0$ so that $w(R)>w(\xi_0)$. The function $w$ being continuous on the compact interval $[a,R]$, it attains its minimum there. Since $w(a)>0>w(\xi_0)$ and $w(R)>w(\xi_0)$, the minimum is attained at some interior point $\xi_1\in(a,R)$, and $w(\xi_1)\leq w(\xi_0)<0$. Thus $-w$ attains at $\xi_1$ a positive interior maximum. Moreover, $L(-w)=0$ and $q\leq0$. Thus  the strong maximum principle implies that $-w$ is constant on $(a,R)$, hence on $[a,R]$ by continuity. This contradicts $w(a)>0>w(\xi_1)$. Hence $w\geq0$ on $[a,+\infty)$.
	
Now, if we assume that $w(\xi_2)=0$ for some $\xi_2>a$. Then $\xi_2$ is an interior minimum of $w$, so that $w'(\xi_2)=0$. Since  \eqref{eqw} is linear with continuous coefficients, the uniqueness for the Cauchy problem gives $w\equiv0$, which contradicts $w(a)>0$. Hence $w>0$ on $[a,+\infty)$.
\end{proof}
\end{document}